\theoremstyle{definition}
\newtheorem{definition}{Definition}[section]
\newtheorem{proposition}[definition]{Proposition}
\newtheorem{theorem}[definition]{Theorem}
\newtheorem{lemma}[definition]{Lemma}
\theoremstyle{remark}
\newtheorem{remark}[definition]{Remark}
\newcommand{\C}{\mathbb{C}}
\newcommand{\even}{\bar{0}}
\newcommand{\odd}{\bar{1}}
\newcommand{\Z}{\mathbb{Z}}
\newcommand{\TT}{\mathrm{T}}
\newcommand{\LL}{\mathrm{L}}
\newcommand{\UU}{\mathrm{U}}
\newcommand{\GG}{\mathrm{G}}
\newcommand{\QQ}{\mathrm{Q}}
\newcommand{\JJ}{\mathrm{J}}
\begin{document}
\title{On  twisted large $N=4$ conformal superalgebras}
\author{Zhihua Chang$^1$ and Arturo Pianzola$^{1,2}$}
\maketitle

$^1$ Department of Mathematical \& Statistical Science, University of Alberta, Edmonton, Alberta, T6G 2G1, Canada.

$^2$ Centro de Altos Estudios en Ciencias Exactas, Avenida de Mayo 866, (1084), Buenos Aires, Argentina.

\begin{abstract}
We explicitly compute the automorphism group of the large $N=4$ conformal superalgebra and classify the twisted loop conformal superalgebras based on the large $N=4$ conformal superalgebra. By considering the corresponding superconformal Lie algebras, we validate the existence of only two (up to isomorphism) such algebras as described in the Physics literature.
\bigskip

\noindent{\it MSC:} 17B65, 17B40, 17B81
\bigskip

\noindent{\it Keywords:} Large $N=4$ superconformal algebra; Superconformal Lie algebra; Differential conformal superalgebra; Automorphism group.
\end{abstract}

\allowdisplaybreaks[2]

\section{Introduction}
\label{sec:intro}

Superconformal Lie algebras are crucial objects in the study of conformal field theories. From a mathematical point of view, a superconformal Lie algebra\footnote{Based on the formal definition given in \cite[5.9]{K}, a superconformal Lie algebra satisfies certain simplicity assumptions. We do not, however, make any such simplicity assumption  in this paper.}
is an infinite dimensional Lie superalgebra corresponding to a  (twisted) loop conformal superalgebra.

Let us go into some detail to clarify this point. According to the axiomatic definition in \cite{K}, a {\it conformal superalgebra} over the field $\Bbb{C}$ of complex numbers consists of
\begin{itemize}
\item a $\Bbb{Z}/2\Bbb{Z}$--graded $\Bbb{C}$--vector space $\mathcal{A}$,
\item a $\Bbb{C}$--bilinear product ${-}_{(n)}{-}$ on $\mathcal{A}$ for each non-negative integer $n$, and
\item a $\Bbb{C}$--linear operator $\partial$ on $\mathcal{A}$ which acts as a derivation for each of the $n$- products.
\end{itemize}
Given a conformal superalgebra $\mathcal{A}$ over $\Bbb{C}$ and an automorphism $\sigma$ of $\mathcal{A}$ of order $m$, $\mathcal{A}\otimes_{\Bbb{C}}\Bbb{C}[t^{\pm1/m}]$ can be equipped with a conformal superalgebra structure given by
\begin{equation}
\widehat{\partial}(a\otimes f)=\partial(a)\otimes f+a\otimes\delta_t(f),\label{eq:loopder}
\end{equation}
and
\begin{equation}
(a\otimes f)_{(n)}(b\otimes g)=\sum\limits_{j\geqslant0}(a_{(n+j)}b)\otimes \delta_t^{(j)}(f)g,\label{eq:loopprod}
\end{equation}
for $a,b\in\mathcal{A}$, $f,g\in\Bbb{C}[t^{\pm1/m}]$ and $n\geqslant0$, where $\delta_t$ is the derivative with respect to $t$ and $\delta_t^{(j)}=\delta_t^j/j!$. Further, $\mathcal{A}\otimes_{\Bbb{C}}\Bbb{C}[t^{\pm1/m}]$ has a sub conformal superalgebra
$$\mathcal{L}(\mathcal{A},\sigma):=\bigoplus\limits_{i\in\Bbb{Z}}\mathcal{A}_i\otimes \Bbb{C} t^{i/m}\subseteq\mathcal{A}\otimes_{\Bbb{C}}\Bbb{C}[t^{\pm1/m}],$$
where $\mathcal{A}_i:=\{a\in\mathcal{A}|\sigma(a)=\zeta_m^ia\}$ and $\zeta_m = e^{\frac{i2\pi}{m}}$ is the standard $m$-th primitive root of unity.
The     conformal superalgebra $\mathcal{L}(\mathcal{A},\sigma)$ is called the {\it twisted loop conformal superalgebra} based on $\mathcal{A}$ with respect to $\sigma$. These are the conformal analogues of the twisted affine Kac-Moody Lie algebras (derived modulo their centre). $\mathcal{L}(\mathcal{A},\sigma)$ yields a Lie superalgebra (in general infinite dimensional, and usually referred to as a superconformal [Lie] algebra in Physics)
$$\mathrm{Alg}(\mathcal{A},\sigma)=\mathcal{L}(\mathcal{A},\sigma)/\widehat{\partial}\mathcal{L}(\mathcal{A},\sigma)$$
with the Lie super-bracket induced from the $0$-th product of $\mathcal{L}(\mathcal{A},\sigma)$. Up to isomorphisms, central extensions of these Lie superalgebras $\mathrm{Alg}(\mathcal{A},\sigma)$'s give the superconformal Lie algebras that are of interest in physics.

In order to classify twisted loop conformal superalgebras, the theory of differential conformal superalgebras was developed in \cite{KLP}. Based on this point of view, $\mathcal{L}(\mathcal{A},\sigma)$ has not only a complex conformal superalgebra structure but also a differential conformal superalgebra over the differential ring $(\Bbb{C}[t^{\pm1}], \frac{d}{dt})$. The resulting differential conformal superalgebra structure over $(\mathbb{C}[t^{\pm1}],\frac{d}{dt})$ allows us to understand $\mathcal{L}(\mathcal{A},\sigma)$ as a twisted form of $\mathcal{L}(\mathcal{A},\mathrm{id})$ with respect to the \'{e}tale extension $\Bbb{C}[t^{\pm1}]\rightarrow\Bbb{C}[t^{\pm1/m}]$. With some finiteness assumption on $\mathcal{A}$, these twisted forms can be classified in terms of non-abelian continuous cohomology set $\mathrm{H}_{\mathrm{ct}}^1\left(\widehat{\Bbb{Z}},
\mathrm{Aut}_{\widehat{\mathcal{D}}\text{-conf}}(\mathcal{A}\otimes_{\Bbb{C}}\widehat{\mathcal{D}})\right)$, where $\widehat{\Bbb{Z}}=\lim\limits_{\longleftarrow}\Bbb{Z}/m\Bbb{Z}$, $\widehat{\mathcal{D}}=(\lim\limits_{\longrightarrow}\Bbb{C}[t^{\pm1/m}],\frac{d}{dt})$ and $\mathrm{Aut}_{\widehat{\mathcal{D}}\text{-conf}}(\mathcal{A}\otimes_{\Bbb{C}}\widehat{\mathcal{D}})$ is the automorphism group of the $\widehat{\mathcal{D}}$--conformal superalgebra $\mathcal{A}\otimes_{\Bbb{C}}\widehat{\mathcal{D}}$.

The above theory has been applied to the $N=1,2,3$ and the small $N=4$ conformal superalgebras. Concretely, the twisted loop conformal superalgebras corresponding to the $N=2$ and small $N=4$ superconformal algebras have been classified in \cite{KLP}. The same classification for $N=3$ has been obtained in \cite{CP2011}. This work also provides  a detail investigation of the automorphism group functors of the $N=1,2,3$ conformal superalgebras. The structure of automorphism group functor of the small $N=4$ conformal superalgebras has been explicitly determined in \cite{C2012}.

Besides the $N=1,2,3$ and the small $N=4$ superconformal algebras, there is another family of superconformal algebras, called large (or big, or maximal) $N=4$ superconformal algebras, which are of interest in two dimensional conformal fields theories. They were discovered in \cite{STV} and have inspired subsequent work such as  \cite{DST}, \cite{R} and \cite{V}. The global and local automorphisms of the large $N=4$ superconformal algebras have been studied in \cite{DST}, and it is based on this that the twisted large $N=4$ superconformal algebras have been described in \cite{V}. In fact, the same method has been employed to deal with twisted $N=2,3,4$ superconformal algebras in \cite{SS1987}.

In this paper, we focus on the large $N=4$ conformal superalgebra, which is the conformal superalgebra $\mathcal{A}$ associated with the large $N=4$ superconformal Lie algebras. We will classify the twisted loop conformal superalgeras based on $\mathcal{A}$ by employing the methods developed in \cite{KLP}. To accomplish this, we first re-formulate the generators and the relations of $\mathcal{A}$ in Section~\ref{sec:conformalalgebra}. Then we explicitly describe the corresponding automorphism group in Section~\ref{sec:automorphism} (a result which we believe is of its own interest), and determine the relevant non-abelian cohomology needed for the classification of twisted forms in Section~\ref{sec:twistedform}. It is relevant to point out that our work shows that the automorphism group of the large $N=4$ superconformal algebra is in fact {\it larger} than the one described in the Physics literature (see \cite{DST}. See also Remark \ref{miss}). It may be that the new automophisms have no physical meaning (hence omitted) , or that they were simply missed.

We finish the paper by considering the (centreless) superconformal algebras corresponding to the two non-isomorphic twisted loop conformal superalgebras based on $\mathcal{A}$ in Section~\ref{sec:superalgebra}. We give an explicit description of the twisted Lie superalgebra $\mathrm{Alg}(\mathcal{A},\omega)$ which may be of interest to physicists. The given generators and relations of $\mathrm{Alg}(\mathcal{A},\omega)$ are in fact quite natural (but they can only be ``seen'' because the twisted construction was carried at the level of conformal superalgebras). As a consequence of the above, it follows that the two large $N=4$ superconformal algebras (ignoring central extensions) considered heretofore in the Physics literature are indeed (up to isomorphism) the only two such algebras. Furthermore, these two algebras are distinct (more precisely, not isomorphic to each other).\bigskip

\noindent{\bf Notation:} Throughout this paper, $\frak{sl}_2(\Bbb{C})$ and $\Bbb{C}^{2\times2}$ will denote the Lie algebra of $2\times2$ matrices with trace zero and the set of all $2\times 2$ matrices. As customary $\mathbf{SL}_2$  denote the group scheme of $2\times2$-matrices of determinant $1$, and $\mathbf{G}_a$ and $\mathbf{G}_{\mathrm{m}}$  the additive and multiplicative group scheme respectively (all group scheme over $\Bbb{C}$).

We will use $D$ and $D_m$ to denote the rings $\Bbb{C}[t^{\pm1}]$ and $\Bbb{C}[^{\pm1/m}]$, respectively; while we set $\widehat{D}:=\lim\limits_{\longrightarrow}D_m$. The differential operator $\delta_t:=\frac{d}{dt}$ acts on $D, D_m$ and $\widehat{D}$ in the usual way. The corresponding differential rings $(D,\delta_t), (D_m,\delta_t)$ and $(\widehat{D},\delta_t)$ are denoted by $\mathcal{D},\mathcal{D}_m$ and $\widehat{\mathcal{D}}$, respectively.

To simplify calculation in a conformal algebra $\mathcal{A}$, we also use the $\lambda$--bracket conviention
$$[a_\lambda b]=\sum\limits_{j\in\Bbb{Z}}\frac{1}{j!}\lambda^j a_{(j)}b,$$
for $a,b\in\mathcal{A}$.

\section{The large $N=4$ confornal superalgebras}
\label{sec:conformalalgebra}
The large $N=4$ superconformal algebras are a family of Lie superalgebras $\frak{g}(\gamma)$ parameterized by one parameter $\gamma\neq0,1$. More precisely, $\frak{g}(\gamma)=\frak{g}(\gamma)_{\even}\oplus\frak{g}(\gamma)_{\odd}$, where
\begin{align*}
\frak{g}(\gamma)_{\even}&=\mathrm{span}_{\mathbb{C}}\left\{\widetilde{c},\widetilde{\LL}_n,\widetilde{\TT}^{\pm i}_n, \widetilde{\UU}_n\middle| i=1,2,3, n\in\Z\right\},\\
\frak{g}(\gamma)_{\odd}&=\mathrm{span}_{\mathbb{C}}\left\{\widetilde{\GG}^p_s,\widetilde{\QQ}^p_s \middle|p=1,2,3,4,s\in1/2+\Z\right\}.
\end{align*}
$\widetilde{c}$ is a central element of $\frak{g}(\gamma)$ and the super-bracket on $\frak{g}(\gamma)$ is defined in \cite{STV} as follows:
\begin{center}
\begin{tabular}{lll}
\multicolumn{2}{l}{$[\widetilde{\LL}_m,\widetilde{\LL}_n]
=(m-n)\widetilde{\LL}_{m+n}+\delta_{m,-n}(m^3-m)\widetilde{c}/12,$}&
$[\widetilde{\LL}_m,\widetilde{\UU}_n]=-n\widetilde{\UU}_{m+n},$\\[1.5ex]
$[\widetilde{\LL}_m,\widetilde{\TT}^{\pm i}_n]=-n\widetilde{\TT}^{\pm i}_{m+n},$&
$[\widetilde{\LL}_m,\widetilde{\GG}^p_s]=\left(m/2-s\right)\widetilde{\GG}^p_{m+s},$&
$[\widetilde{\LL}_m,\widetilde{\QQ}^p_s]=-\left(\frac{1}{2}m+s\right)\widetilde{\QQ}^p_{m+s},$\\[1.5ex]
\multicolumn{2}{l}{$[\widetilde{\TT}^{+i}_m,\widetilde{\TT}^{+j}_n]=\epsilon_{ijk}\widetilde{\TT}^{+k}_{m+n}-m\delta_{ij}\delta_{m,-n} \widetilde{c}/(12\gamma),$}&
$[\widetilde{\TT}^{+i}_m,\widetilde{\TT}^{-j}_n]=[\widetilde{\TT}^{\pm i}_m,\widetilde{\UU}_n]=0,$\\[1.5ex]
\multicolumn{2}{l}{$[\widetilde{\TT}^{-i}_m,\widetilde{\TT}^{-j}_n]=\epsilon_{ijk}\widetilde{\TT}^{-k}_{m+n}-m\delta_{ij}\delta_{m,-n} \widetilde{c}/(12(1-\gamma)),$}&
$[\widetilde{\UU}_m,\widetilde{\UU}_n]=-m\delta_{m,-n}\widetilde{c}/(12\gamma(1-\gamma)),$\\[1.5ex]
\multicolumn{2}{l}{$[\widetilde{\TT}^{+i}_m,\widetilde{\GG}^p_s]=
\alpha^{+i}_{pq}(\widetilde{\GG}^q_{m+s}-2(1-\gamma)m\widetilde{\QQ}^q_{m+s}),$}&
$[\widetilde{\TT}^{-i}_m,\widetilde{\GG}^p_s]=\alpha^{-i}_{pq}(\widetilde{\GG}^q_{m+s}+2\gamma m\widetilde{\QQ}^q_{m+s}),$\\[1.5ex]
$[\widetilde{\TT}^{\pm i}_m,\widetilde{\QQ}^p_s]=\alpha^{\pm i}_{pq}\widetilde{\QQ}^q_{m+s},$&
$[\widetilde{\UU}_m, \widetilde{\GG}^p_s]=m\widetilde{\QQ}^p_{m+s},$&
$[\widetilde{\UU}_m, \widetilde{\QQ}^p_s]=0,$\\[1.5ex]
\multicolumn{2}{l}{$[\widetilde{\QQ}^p_r,\widetilde{\GG}^q_s]
=\delta_{pq}\widetilde{\UU}_{r+s}+2(\alpha^{+i}_{pq}\widetilde{\TT}^{+i}_{r+s}-\alpha^{-i}_{pq}\widetilde{\TT}^{-i}_{r+s}),$}&
$[\widetilde{\QQ}^p_r,\widetilde{\QQ}^q_s]=-\delta_{pq}\delta_{r,-s}\widetilde{c}/(12\gamma(1-\gamma)),$\\[1.5ex]
\multicolumn{3}{l}{$[\widetilde{\GG}^p_r,\widetilde{\GG}^q_s]=2\delta_{pq}\widetilde{\LL}_{r+s}
+4(s-r)(\gamma\alpha^{+i}_{pq}\widetilde{\TT}^{+i}_{r+s}+(1-\gamma)\alpha^{-i}_{pq}\widetilde{\TT}^{-i}_{r+s}) +\delta_{pq}\delta_{r,-s}(r^2-1/4) \widetilde{c}/3,$}\\
\end{tabular}
\end{center}
for $i,j=1,2,3$, $p,q=1,2,3,4$, $m,n\in\Z$, and $r,s\in1/2+\Z$, where $\alpha^{\pm i}$ are $4\times4$--matrices given by
$$\alpha^{\pm i}_{pq}=\pm\frac{1}{2}(\delta_{ip}\delta_{4q}-\delta_{iq}\delta_{4p})+\frac{1}{2}\epsilon_{iab}.$$
By setting
\begin{align*}
&\LL_n=\widetilde{\LL}_n+(\gamma-1/2)(n+1)\widetilde{\UU}_n,&&\TT^{\pm i}_n=\widetilde{\TT}^{\pm i}_n, &&\UU_n=\widetilde{\UU}_n,\\
&\GG^p_s=\widetilde{\GG}^p_s+2(\gamma-1/2)(s+1/2)\widetilde{\QQ}^p_s, &&\QQ^p_s=\widetilde{\QQ}^p_s,&&c=\widetilde{c}/(4\gamma(1-\gamma)),
\end{align*}
for $n\in\Bbb Z, s\in1/2+\Bbb Z$, the anti-commutative relations on $\frak{g}(\gamma)$ are written as
\begin{center}
\begin{tabular}{ll}
$[\LL_m,\LL_n]=(m-n)\LL_{m+n}+\delta_{m,-n}(m^3-m)c/12,$&
$[\LL_m,\TT^{\pm i}_n]=-n\TT^{\pm i}_{m+n},$\\[1.5ex]
$[\LL_m,\UU_n]=-n\UU_{m+n}-(\gamma-1/2)(m^2+m)\delta_{m,-n}c/3,$&
$[\TT^{+i}_m,\TT^{-j}_n]=0,$\\[1.5ex]
$[\TT^{+i}_m,\TT^{+j}_n]=\epsilon_{ijk}\TT^{+k}_{m+n}-(1-\gamma)m\delta_{ij}\delta_{m,-n}c/3,$&
$[\TT^{\pm i}_m,\UU_n]=0,$\\[1.5ex]
$[\TT^{-i}_m,\TT^{-j}_n]=\epsilon_{ijk}\TT^{-k}_{m+n}-\gamma m\delta_{ij}\delta_{m,-n}c/3,$&
$[\UU_m,\UU_n]=-m\delta_{m,-n}c/3,$\\[1.5ex]
$[\LL_m,\GG^p_s]=\left(m/2-s\right)\GG^p_{m+s},$&
$[\TT^{+i}_m,\GG^p_s]=\alpha^{+i}_{pq}(\GG^q_{m+s}-m\QQ^q_{m+s}),$\\[1.5ex]
$[\UU_m, \GG^p_s]=m\QQ^p_{m+s},$&
$[\TT^{-i}_m,\GG^p_s]=\alpha^{-i}_{pq}(\GG^q_{m+s}+ m\QQ^q_{m+s}),$\\[1.5ex]
$[\LL_m,\QQ^p_s]=-\left(\frac{1}{2}m+s\right)\QQ^p_{m+s},$&
$[\TT^{\pm i}_m,\QQ^p_s]=\alpha^{\pm i}_{pq}\QQ^q_{m+s},$\\[1.5ex]
$[\UU_m, \QQ^p_s]=0,$&
$[\QQ^p_r,\QQ^q_s]=-\delta_{pq}\delta_{r,-s}c/3,$\\[1.5ex]
\multicolumn{2}{l}{$[\QQ^p_r,\GG^q_s]
=\delta_{pq}\UU_{r+s}+
2(\alpha^{+i}_{pq}\TT^{+i}_{r+s}-\alpha^{-i}_{pq}\TT^{-i}_{r+s})
-2\delta_{pq}\delta_{r,-s}(\gamma-1/2)(s+1/2)c/3$,}\\[1.5ex]
\multicolumn{2}{l}{$[\GG^p_r,\GG^q_s]=2\delta_{pq}\LL_{r+s}
+2(s-r)(\alpha^{+i}_{pq}\TT^{+i}_{r+s}+\alpha^{-i}_{pq}\TT^{-i}_{r+s}) +\delta_{pq}\delta_{r,-s}(r^2-1/4)c/3,$}\\
\end{tabular}
\end{center}
for $i,j=1,2,3$, $p,q=1,2,3,4$, $m,n\in\Z$, and $r,s\in\frac{1}{2}+\Z$.

The Lie superalgebra $\frak{g}(\gamma)/\C c$ is called the {\it centreless core} of $\frak{g}(\gamma)$. They are referred also as centreless superconformal algebras. We observe that all the centreless cores $\frak{g}(\gamma)/\C c$ are isomorphic and denote this common Lie superalgebra by $\frak{g}$. Every $\frak{g}(\gamma)$ is a central extension of $\frak{g}$.

To the Lie superalgebra $\frak{g}$ one associates the conformal superalgebra $\mathcal{A}$, whose underlying $\Z/2\Z$--graded $\C[\partial]$--module is
$$\mathcal{A}=(\C[\partial]\otimes V_{\even})\oplus(\C[\partial]\otimes V_{\odd})$$
where $V_{\even}=\C\LL\oplus\bigoplus_{i=1}^3(\C\TT^i\oplus\C\TT^{-i})\oplus\C\UU$ and $V_{\odd}=\bigoplus_{p=1}^4(\C\GG^p\oplus\C\QQ^p)$.
The $\lambda$--bracket on $\mathcal{A}$ is given by:
\begin{center}
\begin{tabular}{lll}
$[\LL_\lambda\LL]=(\partial+2\lambda)\LL,$&
$[\LL_\lambda\UU]=(\partial+\lambda)\UU,$&
$[\LL_\lambda\TT^{\pm i}]=(\partial+\lambda)\TT^{\pm i},$\\[1.5ex]
$[{\TT^{\pm i}}_\lambda\TT^{\pm j}]=\epsilon_{ijk}\TT^{\pm k},$&
$[{\TT^{+i}}_\lambda\TT^{-j}]=0,$&
$[{\TT^{\pm i}}_\lambda\UU]=[\UU_\lambda\UU]=0,$\\[1.5ex]
$[\LL_\lambda\GG^p]=\left(\partial+\frac{3}{2}\lambda\right)\GG^p,$&
$[{\TT^{+i}}_\lambda\GG^p]=\alpha_{pq}^{+i}(\GG^q-\lambda\QQ^q),$&
$[{\TT^{\pm i}}_\lambda\QQ^p]=\alpha^{\pm i}_{pq}\QQ^q,$\\[1.5ex]
$[\LL_\lambda\QQ^p]=\left(\partial+\frac{1}{2}\lambda\right)\QQ^p,$&
$[{\TT^{-i}}_\lambda\GG^p]=\alpha_{pq}^{- i}(\GG^q+\lambda\QQ^q),$&
$[{\QQ^p}_\lambda\QQ^q]=0,$\\[1.5ex]
$[\UU_\lambda \GG^p]=\lambda\QQ^p,$&
\multicolumn{2}{l}{$[{\GG^p}_\lambda\GG^q]=2\delta_{pq}\LL
-2(\partial+2\lambda)(\alpha^{+i}_{pq}\TT^{+i}+\alpha^{-i}_{pq}\TT^{-i}),$}\\[1.5ex]
$[\UU_\lambda \QQ^p]=0,$&
\multicolumn{2}{l}{$[{\QQ^p}_\lambda\GG^q]=\delta_{pq}\UU+2(\alpha^{+i}_{pq}\TT^{+i}
-\alpha^{-i}_{pq}\TT^{-i}).$}
\end{tabular}
\end{center}

To simplify computations, we introduce the following notation:
\begin{align*}
\TT^+(X)&:=-\mathbf{i}(x_{12}+x_{21})\TT^{+1}+(x_{12}-x_{21})\TT^{+2}+2\mathbf{i} x_{11}\TT^{+3}\\
\TT^-(X)&:=-\mathbf{i}(x_{12}+x_{21})\TT^{-1}+(x_{12}-x_{21})\TT^{-2}+2\mathbf{i} x_{11}\TT^{-3}\\
\GG(M)&:=(u_{12}+u_{21})\GG^1+\mathbf{i}(u_{12}-u_{21})\GG^2
-(u_{11}-u_{22})\GG^3-\mathbf{i}(u_{11}+u_{22})\GG^4\\
\QQ(M)&:=(u_{12}+u_{21})\QQ^1+\mathbf{i}(u_{12}-u_{21})\QQ^2
-(u_{11}-u_{22})\QQ^3-\mathbf{i}(u_{11}+u_{22})\QQ^4
\end{align*}
for $X=(x_{ij})\in\frak{sl}_2(\Bbb C), M=(u_{ij})\in\Bbb C^{2\times2}$, where $\mathbf{i}=\sqrt{-1}$. With this new notation, the $\lambda$-brackets on $\mathcal{A}$ are written as:
\begin{center}
\begin{tabular}{ll}
$[\LL_\lambda\LL]=(\partial+2\lambda)\LL,$&$[\LL_\lambda\UU]=(\partial+\lambda)\UU,$\\[1.5ex]
$[\LL_\lambda\TT^{\pm}(X)]=(\partial+\lambda)\TT^{\pm}(X),$&$[{\TT^{\pm}(X)}_\lambda\UU]=[\UU_\lambda\UU]=0,$\\[1.5ex]
$[{\TT^{\pm}(X)}_\lambda\TT^{\pm}(Y)]=\TT^{\pm}([X,Y]),$&$[{\TT^{+}(X)}_\lambda\TT^{-}(Y)]=0,$\\[1.5ex]
$[\LL_\lambda\GG(M)]=\left(\partial+\frac{3}{2}\lambda\right)\GG(M),$&$[\UU_\lambda \GG(M)]=\lambda\QQ(M),$\\[1.5ex]
$[\LL_\lambda\QQ(M)]=\left(\partial+\frac{1}{2}\lambda\right)\QQ(M),$&$[\UU_\lambda \QQ(M)]=[{\QQ(M)}_\lambda\QQ(N)]=0,$\\[1.5ex]
$[\TT^+(X)_\lambda\GG(M)]=\GG(XM)-\lambda\QQ(XM),$&$[{\TT^{+}(X)}_\lambda\QQ(M)]=\QQ(XM),$\\[1.5ex]
$[\TT^-(X)_\lambda\GG(M)]=-\GG(MX)-\lambda\QQ(MX),$&$[\TT^-(X)_\lambda\QQ(M)]=-\QQ(MX),$\\[1.5ex]
\multicolumn{2}{l}{$[{\GG(M)}_\lambda\GG(N)]=4\mathrm{tr}(MN^\dag)\LL
+(\partial+2\lambda)\left(\TT^+(MN^\dag-NM^\dag)+\TT^-(M^\dag N-N^\dag M)\right),$}\\[1.5ex]
\multicolumn{2}{l}{$[{\QQ(M)}_\lambda\GG(N)]=2\mathrm{tr}(MN^\dag)\UU-\TT^+(MN^\dag-NM^\dag)+\TT^-(M^\dag N-N^\dag M),$}
\end{tabular}
\end{center}
for $X,Y\in\frak{sl}_2(\Bbb C), M,N\in \Bbb C^{2\times2}$, where $M^\dag=\begin{pmatrix}-u_{22}&u_{12}\\u_{21}&-u_{11}\end{pmatrix}$ if $M=\begin{pmatrix}u_{11}&u_{12}\\u_{21}&u_{22}\end{pmatrix}$. In fact, the map $M\mapsto-M^\dag$ is a symplectic involution on the associative algebra of $2\times2$--matrices (cf. \cite[2.5]{Rowen}).

\section{Automorphisms of $\mathcal{A}$ and of $\mathcal{A}\otimes_{\Bbb C} {\widehat{\mathcal{D}}}$}
\label{sec:automorphism}
The main purpose of this paper is to classify twisted loop conformal superalgebras based on the large $N=4$ conformal superalgebra $\mathcal{A}$. A twisted loop conformal superalgebra $\mathcal{L}(\mathcal{A},\sigma)$ based on $\mathcal{A}$ can be viewed as a {\it $\widehat{\mathcal{D}}/\mathcal{D}$--form} of $\mathcal{A}\otimes_{\Bbb{C}}\mathcal{D} = \mathcal{L}(\mathcal{A},\mathrm{id})$, i.e.,
$$\mathcal{L}(\mathcal{A},\sigma)\otimes_{\mathcal{D}}\widehat{\mathcal{D}}
\cong\mathcal{A}\otimes_{\Bbb{C}}\widehat{\mathcal{D}}$$
where the above is an isomorphism of {\it differential} conformal superalgebras over $\widehat{\mathcal{D}}$, where the differential conformal superalgebras structure on these objects are given in the same way as those on $\mathcal{A}\otimes_{\Bbb{C}}\widehat{\mathcal{D}}$ given by (\ref{eq:loopder}) and (\ref{eq:loopprod}) above. Based on the results of \cite{KLP} (see also Proposition \ref{prop:Diso} below for details), $\widehat{\mathcal{D}}/\mathcal{D}$--forms of $\mathcal{A}\otimes_{\Bbb{C}}\widehat{\mathcal{D}}$ are classified in terms of $\mathrm{H}^1_{\mathrm{ct}}\left(\widehat{\Bbb{Z}},
\mathrm{Aut}_{\widehat{\mathcal{D}}\text{-conf}}(\mathcal{A}\otimes_{\Bbb{C}}\widehat{\mathcal{D}})\right)$. The computation of this $\mathrm{H}^1$ requires us to first determine the automorphism group $\mathrm{Aut}_{\widehat{\mathcal{D}}\text{-conf}}(\mathcal{A}\otimes_{\Bbb{C}}\widehat{\mathcal{D}})$.

We will determine the automorphism group $\mathrm{Aut}_{\widehat{\mathcal{D}}\text{-conf}}(\mathcal{A}\otimes_{\Bbb{C}}\widehat{\mathcal{D}})$ in this section by explicitly constructing all automorphism of $\mathcal{A}\otimes_{\Bbb{C}}\widehat{\mathcal{D}}$. To simplify notations, we write $\mathcal{A}_{\widehat{\mathcal{D}}}:=\mathcal{A}\otimes_{\Bbb{C}}\widehat{\mathcal{D}}$ for short. We always use $V$ to denote the $\Bbb{C}$--vector space spanned by $\{\LL,\TT^{\pm i},\UU,\GG^p,\QQ^p|i=1,2,3,p=1,2,3,4\}$. Note that $\mathcal{A}_{\widehat{\mathcal{D}}}=\Bbb{C}[\partial]\otimes_{\Bbb{C}}V\otimes_{\Bbb{C}}\widehat{D}$ as $\Bbb{C}$--vector spaces. $V$ can be identified with the subspace $1\otimes V\otimes 1$ in $\mathcal{A}_{\widehat{\mathcal{D}}}$. Hence, we also identify an element $\xi\in V$ with its image $1\otimes\xi\otimes1$ in $\mathcal{A}_{\widehat{\mathcal{D}}}$.

With the simplified notation of  Section~\ref{sec:conformalalgebra} and the above, we now proceed to construct automorphisms of the $\widehat{\mathcal{D}}$--conformal superalgebra $\mathcal{A}_{\widehat{\mathcal{D}}}$.

\begin{lemma}
\label{autoSL}
There is a group homomorphism
$$\iota_1:\mathbf{SL}_2(\widehat{D})\times\mathbf{SL}_2(\widehat{D})\rightarrow
\mathrm{Aut}_{\widehat{\mathcal{D}}\text{-conf}}(\mathcal{A}_{\widehat{\mathcal{D}}}),\quad
(A,B)\mapsto\theta_{A,B},$$
where $\theta_{A,B}$ is the automorphism of $\mathcal{A}_{\widehat{\mathcal{D}}}$ given by
\begin{align*}
&\theta_{A,B}(\LL)=\LL+\TT^+(\delta_t(A)A^{-1})+\TT^-(\delta_t(B)B^{-1}),&&\theta_{A,B}(\UU)=\UU,\\
&\theta_{A,B}(\TT^+(X))=\TT^+(AXA^{-1}),&&\theta_{A,B}(\TT^-(X))=\TT^-(BXB^{-1}),\\
&\theta_{A,B}(\GG(M))=\GG(AMB^{-1})-\QQ(\delta_t(A)MB^{-1}-AM\delta_t(B^{-1})),&&\theta_{A,B}(\QQ(M))=\QQ(AMB^{-1}),
\end{align*}
for $X\in\frak{sl}_2(\mathbb{C}), M\in\mathbb{C}^{2\times2}$.
\end{lemma}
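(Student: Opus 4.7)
The plan has three parts: (a) show that each of the six formulas determines a well-defined $\widehat{D}$-linear, $\widehat{\partial}$-commuting endomorphism $\theta_{A,B}$ of $\mathcal{A}_{\widehat{\mathcal{D}}}$; (b) verify that $\theta_{A,B}$ preserves every $\lambda$-bracket; and (c) check the multiplicativity $\theta_{A,B}\circ\theta_{A',B'} = \theta_{AA',BB'}$. Bijectivity of each $\theta_{A,B}$, and hence its membership in $\mathrm{Aut}_{\widehat{\mathcal{D}}\text{-conf}}(\mathcal{A}_{\widehat{\mathcal{D}}})$, then follows from (c) applied to $(A^{-1}, B^{-1})$, using $\theta_{I,I}=\mathrm{id}$. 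Step (a) is short: one observes that $\delta_t(A)A^{-1}$ and $\delta_t(B)B^{-1}$ both lie in $\frak{sl}_2(\widehat{D})$, since $\mathrm{tr}(\delta_t(A)A^{-1}) = \delta_t\log\det(A)=0$ when $\det A=1$, so the stated formulas land in $V\otimes_{\C}\widehat{D}$; the unique $\widehat{D}$-linear, $\widehat{\partial}$-commuting extension to $\mathcal{A}_{\widehat{\mathcal{D}}} = \C[\partial]\otimes V\otimes_{\C}\widehat{D}$ is then automatic. Step (c) reduces to a direct computation on generators using the Leibniz-type identity $\delta_t(AA')(AA')^{-1} = \delta_t(A)A^{-1} + A\delta_t(A')(A')^{-1}A^{-1}$ (and its $B$-analogue) to accumulate the $\TT^\pm$-corrections correctly in $\theta(\LL)$, and the product rule for $\delta_t(AA'M(BB')^{-1})$ to accumulate the $\QQ$-correction in $\theta(\GG(M))$.

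For step (b), the $\widehat{D}$-sesqui-linearity of the $n$-products given in (\ref{eq:loopprod}) reduces the whole problem to checking $\theta_{A,B}([v_\lambda w]) = [\theta_{A,B}(v)_\lambda \theta_{A,B}(w)]$ on pairs of basis elements of $V$. Most of these are routine: $[{\TT^\pm(X)}_\lambda \TT^\pm(Y)]$ translates via $A[X,Y]A^{-1} = [AXA^{-1}, AYA^{-1}]$, all brackets with $\UU$ are trivial since $\theta(\UU)=\UU$, and the $[{\TT^\pm(X)}_\lambda \QQ(M)]$ brackets fall through by the same conjugation argument. For the $\GG$-containing brackets not involving $\LL$ on the left, the key algebraic input is the identity $B^\dag = -B^{-1}$ for $B\in\mathbf{SL}_2$, which combined with the anti-involution property $(MN)^\dag = N^\dag M^\dag$ yields $(AMB^{-1})(ANB^{-1})^\dag = A(MN^\dag)A^{-1}$ and $(AMB^{-1})^\dag(ANB^{-1}) = B(M^\dag N)B^{-1}$. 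These formulas simultaneously control the $\TT^\pm$-arguments emerging from $[{\GG(M)}_\lambda \GG(N)]$ and $[{\QQ(M)}_\lambda \GG(N)]$ and the coefficient $\mathrm{tr}(MN^\dag)$ of $\LL$ and $\UU$.

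The main obstacle is the remaining set of checks involving $\LL$ on the left, where the correction $\theta(\LL) = \LL + \TT^+(\delta_t(A)A^{-1}) + \TT^-(\delta_t(B)B^{-1})$ and the $\QQ$-correction in $\theta(\GG(M))$ must conspire with the $\delta_t$-terms arising from sesqui-linearity. Whenever the bracket passes across a $\widehat{D}$-scalar $f$, formula (\ref{eq:loopprod}) produces the series of $\delta_t^{(j)}(f)$; together with $\widehat{\partial}(v\otimes f) = \partial v\otimes f + v\otimes\delta_t(f)$, this generates extra $\TT^\pm(\delta_t(\cdot))$ and $\QQ(\delta_t(\cdot))$ pieces that must cancel against the explicit corrections. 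The essential algebraic glue is the Maurer--Cartan identity $\delta_t(A^{-1}) = -A^{-1}\delta_t(A)A^{-1}$ (and its $B$-analogue). I expect the longest single calculation to be the verification of $[\theta(\GG(M))_\lambda \theta(\GG(N))] = \theta([{\GG(M)}_\lambda \GG(N)])$: one must track the $\GG\times\QQ$, $\QQ\times\GG$, and trivially vanishing $\QQ\times\QQ$ cross-brackets generated by the $\QQ$-corrections, and verify that their combined contribution with $[{\GG(M)}_\lambda\GG(N)]$ collapses onto $4\,\mathrm{tr}(MN^\dag)\theta(\LL) + (\widehat{\partial}+2\lambda)\bigl(\TT^+(\cdot) + \TT^-(\cdot)\bigr)$ with the correct matrix arguments. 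This is precisely where the $\mathbf{SL}_2$-hypothesis, the symplectic involution $(\cdot)^\dag$, and the differential structure on $\widehat{\mathcal{D}}$ all come together.
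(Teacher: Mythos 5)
Your proposal is correct and follows essentially the same route as the paper: define $\theta_{A,B}$ on $V\otimes\widehat{D}$ and extend $\widehat{D}$-linearly commuting with $\widehat{\partial}$, verify the $\lambda$-brackets on pairs of generators (using $A^\dag=-A^{-1}$ and the anti-involution property of $\dag$, exactly the identities the paper invokes in its sample check of $[\QQ(M)_\lambda\GG(N)]$), then establish $\theta_{A,B}\circ\theta_{A',B'}=\theta_{AA',BB'}$ and conclude invertibility from $\theta_{I,I}=\mathrm{id}$. The only cosmetic difference is that you spell out the auxiliary identities (trace of $\delta_t(A)A^{-1}$, Maurer--Cartan, Leibniz for products) that the paper leaves implicit in its ``direct computation.''
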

\begin{proof}
Recall that the underlying $\widehat{D}$--module of $\mathcal{A}_{\widehat{\mathcal{D}}}$ is $\mathbb{C}[\partial]\otimes_{\mathbb{C}}V\otimes_{\mathbb{C}}\widehat{D}$. The formulas define a $\widehat{D}$--module homomorphism $V\otimes_{\mathbb{C}}\widehat{D}\rightarrow V\otimes_{\mathbb{C}}\widehat{D}$, which is uniquely extended to a $\widehat{D}$--module homomorphism $\theta_{A,B}:\mathcal{A}_{\widehat{\mathcal{D}}}\rightarrow\mathcal{A}_{\widehat{\mathcal{D}}}$ satisfying $\widehat{\partial}\circ\theta_{A,B}=\theta_{A,B}\circ\widehat{\partial}$.

Based on \cite[Lemma~3.1(ii)]{KLP}, it can be proved that $\theta_{A,B}$ is a homomorphism of $\widehat{\mathcal{D}}$--conformal superalgebras by checking
$$\theta_{A,B}([(\xi\otimes1)_\lambda(\eta\otimes1)])=[\theta_{A,B}(\xi\otimes1)_\lambda\theta_{A,B}(\eta\otimes1)]$$
for all $\xi,\eta\in V$. This can be accomplished through a direct computation. As an example, we show the proof for $\xi=\QQ(M)$ and $\eta=\GG(N)$ with $M,N\in\mathbb{C}^{2\times2}$.
\begin{align*}
&\theta_{A,B}([\QQ(M)_\lambda\GG(N)])\\
&=2\mathrm{tr}(MN^\dag)\theta_{A,B}(\UU)
-\theta_{A,B}(\TT^+(MN^\dag-NM^\dag))+\theta_{A,B}(\TT^-(M^\dag N-N^\dag M))\\
&=2\mathrm{tr}(MN^\dag)\UU-\TT^+(A(MN^\dag-NM^\dag)A^{-1})+\TT^-(B(M^\dag N-N^\dag M)B^{-1}),\\
&[\theta_{A,B}(\QQ(M))_\lambda\theta_{A,B}(\GG(N))]\\
&=[\QQ(AMB^{-1})_\lambda(\GG(ANB^{-1})-\QQ(\delta_t(A)NB^{-1}-AN\delta_t(B^{-1})))]\\
&=2\mathrm{tr}(AMB^{-1}(ANB^{-1})^\dag)\UU\\
&\quad-\TT^+(AMB^{-1}(ANB^{-1})^\dag-ANB^{-1}(AMB^{-1})^\dag)\\
&\quad+\TT^-((AMB^{-1})^\dag ANB^{-1}-(ANB^{-1})^\dag AMB^{-1})\\
&=2\mathrm{tr}(MN^\dag)\UU-\TT^+(A(MN^\dag-NM^\dag)A^{-1})+\TT^-(B(M^\dag N-N^\dag M)B^{-1}),
\end{align*}
where we use the facts that $(M_1M_2M_3)^\dag=M_3^\dag M_2^\dag M_1^\dag$ for $M_1,M_2,M_3\in\mathbb{C}^{2\times2}$ and that $A^{-1}=-A^\dag$ for $A\in\mathbf{SL}_2(\widehat{D})$.

A similar computation also shows that
$$\theta_{A_1,B_1}\circ\theta_{A_2,B_2}(\xi\otimes1)=\theta_{A_1A_2,B_1B_2}(\xi\otimes1),$$
for $A_1,A_2,B_1,B_2\in\mathbf{SL}_2(\widehat{D})$ and all $\xi\in V$. We thus deduce from \cite[Lemma~3.1(i)]{KLP} that $$\theta_{A_1,B_1}\circ\theta_{A_2,B_2}=\theta_{A_1A_2,B_1B_2}.$$
We also observe that $\theta_{I_2,I_2}=\mathrm{id}$, where $I_2$ is the identity matrix. Hence, the above equality implies that $\theta_{A,B}$ is invertible and $\iota_1:\mathbf{SL}_2(\widehat{D})\times\mathbf{SL}_2(\widehat{D})\rightarrow \mathrm{Aut}_{\widehat{\mathcal{D}}\text{-conf}}(\mathcal{A}_{\widehat{\mathcal{D}}}),
(A,B)\mapsto\theta_{A,B}$ is a group homomorphism.
\end{proof}

\begin{lemma}
\label{autoADD}
There is a group homomorphism
$$\iota_2:\mathbf{G}_a(\widehat{D})\rightarrow \mathrm{Aut}_{\widehat{D}\text{-conf}}(\mathcal{A}_{\widehat{\mathcal{D}}}),\quad
f\mapsto\tau_f,$$
where $\tau_f$ is the automorphism of $\mathcal{A}_{\widehat{\mathcal{D}}}$ defined by
\begin{align*}
&\tau_f(\LL)=\LL+\UU\otimes f,&&\tau_f(\TT^+(X))=\TT^+(X),&&\tau_f(\TT^-(X))=\TT^-(X),\\
&\tau_f(\UU)=\UU,&&\tau_f(\GG(M))=\GG(M)+\QQ(f M),&&\tau_f(\QQ(M))=\QQ(M),
\end{align*}
for $X\in\frak{sl}_2(\mathbb{C})$ and $M\in\mathbb{C}^{2\times2}$.
\end{lemma}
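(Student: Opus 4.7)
The plan follows the template of Lemma~\ref{autoSL}: first extend the given formulas to a $\widehat{D}$--module endomorphism of $\mathcal{A}_{\widehat{\mathcal{D}}}$ commuting with $\widehat{\partial}$; then verify the $\lambda$--bracket identity on generators; then establish the group-homomorphism property $\tau_{f_1}\circ\tau_{f_2}=\tau_{f_1+f_2}$; finally note $\tau_0=\mathrm{id}$ so each $\tau_f$ is invertible. The extension step is identical to the first paragraph of the previous proof: the formulas specify a $\Bbb{C}$--linear map on $V=1\otimes V\otimes 1$ which extends uniquely, first $\widehat{D}$--linearly to $V\otimes_{\Bbb{C}}\widehat{D}$ and then, by commuting with $\widehat{\partial}$, to all of $\mathcal{A}_{\widehat{\mathcal{D}}}=\Bbb{C}[\partial]\otimes V\otimes\widehat{D}$.

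By \cite[Lemma~3.1(ii)]{KLP} it is enough to check $\tau_f([\xi_\lambda\eta])=[\tau_f(\xi)_\lambda\tau_f(\eta)]$ for all $\xi,\eta\in V$. Since $\tau_f$ is the identity on $\TT^\pm(X)$, $\UU$ and $\QQ(M)$, every bracket not involving $\LL$ or $\GG(M)$ is automatic. The shortest non-trivial cases are $[\LL_\lambda\LL]$ and $[\LL_\lambda\GG(M)]$: the single new term $\UU\otimes f$ in $\tau_f(\LL)$ contributes, via $[\UU_\lambda\GG(M)]=\lambda\QQ(M)$, exactly the $\QQ$--correction needed to match $\tau_f$ applied to the right-hand side, while $[\UU_\lambda\UU]=0$ and $[\UU_\lambda\QQ(M)]=0$ kill all other potentially obstructing terms. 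The main obstacle is $[\GG(M)_\lambda\GG(N)]$: both arguments now carry a $\QQ\otimes f$ correction, so one must expand
\[
[(\GG(M)+\QQ(M)\otimes f)_\lambda(\GG(N)+\QQ(N)\otimes f)]
\]
and show that the cross terms coming from $[\QQ(M)_\lambda\GG(N)]$ and $[\GG(M)_\lambda\QQ(N)]$, together with the $\delta_t(f)$ tails that sesquilinearity of the $\lambda$--bracket on $\mathcal{A}_{\widehat{\mathcal{D}}}$ produces, reassemble into $\tau_f$ applied to $4\mathrm{tr}(MN^\dag)\LL+(\widehat{\partial}+2\lambda)(\TT^+(MN^\dag-NM^\dag)+\TT^-(M^\dag N-N^\dag M))$. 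The key observation is that the trace piece of $[\QQ(M)_\lambda\GG(N)]=\delta_{pq}\UU+\cdots$ produces precisely the $4\mathrm{tr}(MN^\dag)\,\UU\otimes f$ contribution required by the $\UU\otimes f$ summand of $\tau_f(\LL)$. The remaining case $[\QQ(M)_\lambda\GG(N)]$ is strictly simpler: only the right-hand argument is modified, and the $\UU$--coefficient is absorbed because $\tau_f(\UU)=\UU$.

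For the group-homomorphism step, a direct evaluation on each generator of $V$ gives $\tau_{f_1}\tau_{f_2}(\xi)=\tau_{f_1+f_2}(\xi)$: on $\LL$ both sides read $\LL+\UU\otimes(f_1+f_2)$ and on $\GG(M)$ both sides read $\GG(M)+\QQ((f_1+f_2)M)$, while $\TT^\pm(X)$, $\UU$ and $\QQ(M)$ are fixed by every $\tau_f$. Extending by \cite[Lemma~3.1(i)]{KLP} gives $\tau_{f_1}\circ\tau_{f_2}=\tau_{f_1+f_2}$ on all of $\mathcal{A}_{\widehat{\mathcal{D}}}$. Since $\tau_0=\mathrm{id}$, each $\tau_f$ is invertible with inverse $\tau_{-f}$, and $\iota_2$ is the desired group homomorphism.
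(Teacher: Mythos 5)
Your proposal is correct and follows essentially the same route as the paper, which likewise defers to the method of Lemma~\ref{autoSL}: extend the formulas to a $\widehat{D}$--module map commuting with $\widehat{\partial}$, verify the $\lambda$--brackets on generators via \cite[Lemma~3.1]{KLP}, check $\tau_{f_1}\circ\tau_{f_2}=\tau_{f_1+f_2}$, and conclude from $\tau_0=\mathrm{id}$. One small imprecision: the $\delta_t(f)$ tails actually arise (and cancel) in the $[\LL_\lambda\LL]$ and $[\LL_\lambda\GG(M)]$ checks rather than in $[{\GG(M)}_\lambda\GG(N)]$, where the cross terms $[{\QQ(M)}_\lambda\GG(N)]$ and $[{\GG(M)}_\lambda\QQ(N)]$ carry no $\lambda$ or $\partial$ dependence and simply combine to give $4\mathrm{tr}(MN^\dag)\,\UU\otimes f$ as you say.
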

\begin{proof}
An analogous argument as in Lemma~\ref{autoSL} shows that the formulas define a homomorphism of $\widehat{\mathcal{D}}$--conformal superalgebras $\tau_f:\mathcal{A}_{\widehat{\mathcal{D}}}\rightarrow\mathcal{A}_{\widehat{\mathcal{D}}}$ for every $f\in\widehat{D}$ and $\tau_{f_1}\circ\tau_{f_2}=\tau_{f_1+f_2}$ for $f_1,f_2\in\widehat{D}$. Observing that $\tau_0=\mathrm{id}$, we obtain that $\tau_f$ has inverse $\tau_{-f}$ and $\iota_2$ is a group homomorphism.
\end{proof}

\begin{lemma}
\label{autoINV}
There is an automorphism $\omega$ of the $\widehat{\mathcal{D}}$--conformal superalgebra $\mathcal{A}_{\widehat{\mathcal{D}}}$ such that
\begin{align*}
&\omega(\LL)=\LL,&&\omega(\TT^+(X))=\TT^-(X),&&\omega(\TT^-(X))=\TT^+(X),\\
&\omega(\UU)=-\UU,&&\omega(\GG(M))=\GG(M^\dag),&&\omega(\QQ(M))=-\QQ(M^\dag),
\end{align*}
for $X\in\frak{sl}_2(\mathbb{C})$ and $M\in\mathbb{C}^{2\times2}$. In addition, $\omega^2=\mathrm{id}$.
\end{lemma}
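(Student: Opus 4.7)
The plan is to follow the strategy of Lemmas \ref{autoSL} and \ref{autoADD}: declare $\omega$ on the subspace $V$ by the given $\C$--linear formulas, extend $\widehat{D}$--linearly to $V\otimes_{\C}\widehat{D}$, and then uniquely to a $\widehat{D}$--module endomorphism of $\mathcal{A}_{\widehat{\mathcal{D}}}=\C[\partial]\otimes V\otimes\widehat{D}$ commuting with $\widehat{\partial}$. To upgrade this to a homomorphism of $\widehat{\mathcal{D}}$--conformal superalgebras, \cite[Lemma~3.1(ii)]{KLP} reduces the task to checking $\omega([\xi_\lambda\eta])=[\omega(\xi)_\lambda\omega(\eta)]$ for every pair $\xi,\eta\in V$.

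Three identities make the verifications routine. The anti-involution property $(M_1M_2)^\dag=-M_2^\dag M_1^\dag$ (equivalent to the fact, already noted in the text, that $M\mapsto -M^\dag$ is an anti-automorphism of $\C^{2\times2}$) immediately yields $(M_1M_2M_3)^\dag=M_3^\dag M_2^\dag M_1^\dag$ and $(M^\dag)^\dag=M$. Restricting to trace-free matrices gives $X^\dag=X$ for $X\in\frak{sl}_2(\C)$. Finally, a one-line $2\times 2$ computation establishes the trace identity $\mathrm{tr}(MN^\dag)=\mathrm{tr}(M^\dag N)$. With these in hand the purely even brackets follow from the swap $\TT^+\leftrightarrow\TT^-$ together with the observation that $\omega(\UU)=-\UU$ appears an even number of times in $[\UU_\lambda\UU]=0$. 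The mixed brackets $[\LL_\lambda\GG(M)]$, $[\LL_\lambda\QQ(M)]$, $[\UU_\lambda\GG(M)]$, $[\TT^\pm(X)_\lambda\GG(M)]$, $[\TT^\pm(X)_\lambda\QQ(M)]$ (and their immediate analogues) work because the sign in $\omega(\QQ(M))=-\QQ(M^\dag)$ conspires with the sign in $(XM)^\dag=-M^\dag X$ (or in $(MX)^\dag=-XM^\dag$, using $X^\dag=X$) to produce matching outputs.

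I expect the main obstacle to be the $[\GG(M)_\lambda\GG(N)]$ bracket. Applying $\omega$ to the right-hand side gives
$4\mathrm{tr}(MN^\dag)\LL+(\partial+2\lambda)\bigl(\TT^{-}(MN^\dag-NM^\dag)+\TT^{+}(M^\dag N-N^\dag M)\bigr)$, while expanding $[\GG(M^\dag)_\lambda\GG(N^\dag)]$ from the definition produces $4\mathrm{tr}\bigl(M^\dag (N^\dag)^\dag\bigr)\LL$ plus the $\TT^\pm$ contributions built from $M^\dag(N^\dag)^\dag-N^\dag(M^\dag)^\dag$ and $(M^\dag)^\dag N^\dag-(N^\dag)^\dag M^\dag$; the identity $(M^\dag)^\dag=M$ collapses these to exactly the previous expressions, and the trace identity matches the $\LL$--coefficients. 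The $[\QQ(M)_\lambda\GG(N)]$ bracket is dispatched by the same three identities (with an extra minus sign coming from $\omega(\QQ)=-\QQ$).

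Finally, $\omega^2=\mathrm{id}$ is checked on each element of $V$: the signs on $\UU$ and $\QQ$ square to $+1$, the swap $\TT^+\leftrightarrow\TT^-$ is its own inverse, and $(M^\dag)^\dag=M$ handles $\GG$ and $\QQ$. Since $\omega^2$ and $\mathrm{id}$ are both $\widehat{D}$--module endomorphisms of $\mathcal{A}_{\widehat{\mathcal{D}}}$ commuting with $\widehat{\partial}$, \cite[Lemma~3.1(i)]{KLP} promotes their agreement on $V$ to agreement on all of $\mathcal{A}_{\widehat{\mathcal{D}}}$, whence $\omega$ is its own inverse and therefore an automorphism.
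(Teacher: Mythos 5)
Your proposal is correct and is essentially the paper's own argument: the paper proves Lemma~\ref{autoINV} by saying it is ``similar to Lemma~\ref{autoSL}'', i.e., extend the formulas on $V$ to a $\widehat{D}$--module map commuting with $\widehat{\partial}$, verify the $\lambda$--brackets on pairs of generators via \cite[Lemma~3.1]{KLP}, and check $\omega^2=\mathrm{id}$ on generators. The identities you isolate --- $(M_1M_2M_3)^\dag=M_3^\dag M_2^\dag M_1^\dag$ (from the anti-automorphism $M\mapsto -M^\dag$), $(M^\dag)^\dag=M$, $X^\dag=X$ for traceless $X$, and $\mathrm{tr}(MN^\dag)=\mathrm{tr}(M^\dag N)$ --- are exactly what the omitted verification uses, and your sign bookkeeping in the $\GG$ and $\QQ$ brackets is right.
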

\begin{proof}
The proof is similar to that of Lemma~\ref{autoSL}.
\end{proof}

\begin{lemma}
\label{autoCOM}\quad
\begin{enumerate}
\item For $A,B\in\mathbf{SL}_2(\widehat{D})$ and $f\in\mathbf{G}_a(\widehat{D})$, $\tau_f\circ\theta_{A,B}=\theta_{A,B}\circ\tau_f$.
\item For $A,B\in\mathbf{SL}_2(\widehat{D})$, $\omega\circ\theta_{A,B}\circ\omega=\theta_{B,A}$.
\item For $f\in\mathbf{G}_a(\widehat{D})$, $\omega\circ\tau_f\circ\omega=\tau_{-f}$.
\end{enumerate}
\end{lemma}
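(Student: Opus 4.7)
All three identities are equalities of $\widehat{D}$--module endomorphisms of $\mathcal{A}_{\widehat{\mathcal{D}}}$ that commute with $\widehat{\partial}$. By the extension principle used in Lemmas~\ref{autoSL} and~\ref{autoADD} (based on \cite[Lemma~3.1(i)]{KLP}), any such map is determined by its restriction to $V\otimes 1\subset\mathcal{A}_{\widehat{\mathcal{D}}}$. The plan is therefore to evaluate both sides on each of the generators $\LL,\UU,\TT^{\pm}(X),\GG(M),\QQ(M)$ and compare, using the ``matrix dictionary'' $M^{\dag\dag}=M$, $(M_1M_2M_3)^\dag=M_3^\dag M_2^\dag M_1^\dag$, and $A^\dag=-A^{-1}$ (equivalently $(B^{-1})^\dag=-B$) for $A,B\in\mathbf{SL}_2(\widehat{D})$.

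Parts (i) and (iii) should be essentially bookkeeping. For (i), $\tau_f$ alters only $\LL$ and $\GG(M)$, adding $\UU\otimes f$ and $\QQ(fM)$ respectively; since $\theta_{A,B}$ fixes $\UU$ and sends $\QQ(M)\mapsto\QQ(AMB^{-1})$, commutativity reduces to the identity $A(fM)B^{-1}=f\cdot AMB^{-1}$ in $\widehat{D}^{\,2\times 2}$, which is immediate because $f$ is central. For (iii), the $\widehat{D}$--linearity of $\omega$ gives $\omega(\UU\otimes f)=-\UU\otimes f$ and $\omega(\QQ(fM))=-\QQ(fM^\dag)$; combining with $\omega^2=\mathrm{id}$ from Lemma~\ref{autoINV} and $M^{\dag\dag}=M$, each generator absorbs exactly one sign, producing the change $f\leadsto -f$.

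The substantive part is (ii). On $\LL$, $\UU$, $\TT^\pm(X)$, and $\QQ(M)$ the verification follows from the matrix dictionary alone: for instance $\omega\circ\theta_{A,B}\circ\omega(\TT^+(X))=\omega(\TT^-(BXB^{-1}))=\TT^+(BXB^{-1})=\theta_{B,A}(\TT^+(X))$, and on $\LL$ the identity $\omega(\TT^\pm)=\TT^\mp$ swaps the two derivative corrections $\TT^+(\delta_t(A)A^{-1})$ and $\TT^-(\delta_t(B)B^{-1})$. The hard case, and the main obstacle, is $\GG(M)$, because its $\QQ$--correction involves both $\delta_t(A)$ and $\delta_t(B^{-1})$, and these must be pushed through the anti-involution $\dag$ in a compatible way.

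After using the $\GG/\QQ$ behaviour of $\omega$ on $\theta_{A,B}(\GG(M^\dag))$, the verification of the $\GG(M)$ case will reduce to establishing
$$\bigl(\delta_t(A)M^\dag B^{-1}-AM^\dag\delta_t(B^{-1})\bigr)^\dag \;=\; -\bigl(\delta_t(B)MA^{-1}-BM\delta_t(A^{-1})\bigr).$$
The key observation is that $\dag$ is $\widehat{D}$--linear (it merely permutes and negates matrix entries), hence commutes with $\delta_t$; applying this to $A^\dag=-A^{-1}$ and $(B^{-1})^\dag=-B$ yields the two identities $(\delta_t A)^\dag=-\delta_t(A^{-1})$ and $(\delta_t(B^{-1}))^\dag=-\delta_t(B)$. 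Feeding these into the anti-involution rule $(XYZ)^\dag=Z^\dag Y^\dag X^\dag$ produces precisely the required expression, and the plan is complete.
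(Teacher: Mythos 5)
Your proposal is correct and follows essentially the same route as the paper, which simply reduces all three identities to a generator-by-generator check via \cite[Lemma~3.1(i)]{KLP} and a direct computation; your explicit verification of the key identity $\bigl(\delta_t(A)M^\dag B^{-1}-AM^\dag\delta_t(B^{-1})\bigr)^\dag=-\bigl(\delta_t(B)MA^{-1}-BM\delta_t(A^{-1})\bigr)$ (using that $\dag$ commutes with $\delta_t$ and that $A^\dag=-A^{-1}$ on $\mathbf{SL}_2(\widehat{D})$) is exactly the computation the paper leaves implicit.
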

\begin{proof}
(i) From \cite[Lemma~3.1 (i)]{KLP}, it suffices to show
$$\tau_f\circ\theta_{A,B}(\xi\otimes1)=\theta_{A,B}\circ\tau_f(\xi\otimes1),$$
for all $\xi\in V$. This can be verified by a direct computation. Similarly for (ii) and (iii).
\end{proof}

\begin{theorem}
\label{autoTHM}
There is a group isomorphism:
$$\mathrm{Aut}_{\widehat{\mathcal{D}}\text{-conf}}(\mathcal{A}_{\widehat{\mathcal{D}}})\cong
\left(\frac{\mathbf{SL}_2(\widehat{D})\times\mathbf{SL}_2(\widehat{D})}{\langle(-I_2,-I_2)\rangle}
\times\mathbf{G}_a(\widehat{D})\right)\rtimes\Bbb Z/2\Bbb Z.$$
\end{theorem}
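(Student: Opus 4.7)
Lemmas~\ref{autoSL}--\ref{autoCOM} combine to produce a well-defined group homomorphism
\begin{equation*}
\Psi: \bigl(\mathbf{SL}_2(\widehat{D}) \times \mathbf{SL}_2(\widehat{D}) \times \mathbf{G}_a(\widehat{D})\bigr) \rtimes \Z/2\Z \longrightarrow \mathrm{Aut}_{\widehat{\mathcal{D}}\text{-conf}}(\mathcal{A}_{\widehat{\mathcal{D}}}),\quad ((A,B),f,\epsilon)\mapsto \theta_{A,B}\circ\tau_f\circ\omega^\epsilon,
\end{equation*}
where the $\Z/2\Z$-action on the base is the one forced by Lemma~\ref{autoCOM}(ii),(iii): simultaneous swap of the two $\mathbf{SL}_2$-factors combined with negation on $\mathbf{G}_a$. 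The theorem is equivalent to computing $\ker(\Psi)=\langle((-I_2,-I_2),0,0)\rangle$ and showing that $\Psi$ is surjective.

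The kernel is short. Since $\omega(\UU)=-\UU$ while every $\theta_{A,B}\circ\tau_f$ fixes $\UU$, a kernel element must have $\epsilon=0$. Evaluating $\theta_{A,B}\circ\tau_f=\mathrm{id}$ on $\LL$ forces $\delta_t(A)A^{-1}=\delta_t(B)B^{-1}=0$ and $f=0$, so $A,B$ are constant; evaluating on $\TT^\pm(X)$ for varying $X\in\frak{sl}_2(\C)$ forces $A,B\in\{\pm I_2\}$; and evaluating on $\GG(M)$ forces $A=B$. The surviving kernel generator $((-I_2,-I_2),0,0)$ is fixed by the $\Z/2\Z$-action, which is what allows the quotient factor to appear first in the theorem statement.

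Surjectivity is the main work and proceeds by a chain of four reductions on an arbitrary $\Phi\in\mathrm{Aut}_{\widehat{\mathcal{D}}\text{-conf}}(\mathcal{A}_{\widehat{\mathcal{D}}})$. \emph{(i)~Extract $\epsilon$.} The two $\mathfrak{sl}_2$-current subalgebras $\mathcal{T}^{\pm}:=\C[\partial]\otimes\mathrm{span}_\C\{\TT^{\pm1},\TT^{\pm2},\TT^{\pm3}\}\otimes\widehat{D}$ admit an intrinsic characterization inside $\mathcal{A}_{\widehat{\mathcal{D}}}$; for instance, they are distinguished by the asymmetric way they act on the matrix parameter of the odd generators --- $\mathcal{T}^{+}$ by left and $\mathcal{T}^{-}$ by right multiplication on $M$ in $\GG(M)$ and $\QQ(M)$, visible from $[{\TT^{\pm}(X)}_\lambda\GG(M)]$. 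Thus $\Phi$ either stabilizes each $\mathcal{T}^\pm$ or swaps them, yielding $\epsilon\in\{0,1\}$; after replacing $\Phi$ by $\Phi\circ\omega^\epsilon$, assume both $\mathcal{T}^\pm$ are preserved. \emph{(ii)~Extract $(A,B)$.} Each restriction $\Phi|_{\mathcal{T}^\pm}$ is a $\widehat{\mathcal{D}}$-conformal automorphism of an $\mathfrak{sl}_2$-current algebra over $\widehat{D}$; by standard rigidity these are classified by $\mathbf{PGL}_2(\widehat{D})$, and because every unit of $\widehat{D}$ has a square root in $\widehat{D}$, the projection $\mathbf{SL}_2(\widehat{D})\to\mathbf{PGL}_2(\widehat{D})$ is surjective. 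Choose lifts $A,B\in\mathbf{SL}_2(\widehat{D})$ and replace $\Phi$ by $\theta_{A,B}^{-1}\circ\Phi$, so that $\Phi$ fixes every $\TT^\pm(X)$.

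\emph{(iii)~Extract $f$.} Applying $\Phi$ to the relations $[\LL_\lambda\TT^\pm(X)]=(\partial+\lambda)\TT^\pm(X)$ and $[\UU_\lambda\TT^\pm(X)]=0$, combined with $\widehat\partial$-equivariance and $\widehat{D}$-linearity, forces $\Phi(\UU)=\UU$ and $\Phi(\LL)=\LL+\UU\otimes f$ for a unique $f\in\widehat{D}$. Composing with $\tau_{-f}$ makes $\Phi$ the identity on the whole even part. \emph{(iv)~Triviality on the odd part.} The brackets $[{\TT^{\pm}(X)}_\lambda\GG(M)]$, $[\UU_\lambda\GG(M)]=\lambda\QQ(M)$ and $[{\QQ(M)}_\lambda\GG(N)]$ then pin $\Phi(\GG(M))$ and $\Phi(\QQ(M))$ down to $\GG(M)$ and $\QQ(M)$ respectively, completing the proof that $\Phi=\mathrm{id}$. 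The main obstacle is step~(i): one must extract the two ideals $\mathcal{T}^\pm$ purely from the bracket data, without presupposing a preferred Virasoro or Heisenberg element (since neither $\Phi(\LL)$ nor $\Phi(\UU)$ is guaranteed equal to $\LL$ or $\UU$ a priori); once this is settled, the remaining steps amount to routine module-theoretic and $\lambda$-bracket calculations.
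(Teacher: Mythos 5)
Your overall architecture is the same as the paper's (the kernel computation is essentially identical, and the reduction of an arbitrary automorphism by composing with $\omega$, $\theta_{A,B}^{-1}$ and $\tau_{-f}$ is exactly the paper's strategy), but there is a genuine gap precisely at the point you yourself flag as ``the main obstacle'': step (i) is asserted, not proved. The criterion you offer --- that $\mathcal{T}^{+}$ acts by left and $\mathcal{T}^{-}$ by right multiplication on the matrix parameter of $\GG(M),\QQ(M)$ --- only tells the two copies of $\frak{sl}_2$ apart \emph{after} you already know that $\Phi$ maps the span of the $\TT^{\pm i}$ (over $\C[\partial]\otimes\widehat{D}$) back into itself; it does not exclude that $\Phi(\TT^{\pm i})$ or $\Phi(\UU)$ acquires components along $\LL$ (or $\widehat{\partial}$-powers thereof). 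Establishing that containment is the substantive part of the paper's surjectivity argument: one writes $\varphi(\xi)=\sum_{m}\widehat{\partial}^m(\LL\otimes f_m)+\xi'$ with $\xi'\in\mathcal{B}_{\widehat{\mathcal{D}}}$, where $\mathcal{B}=\mathcal{C}\oplus\C[\partial]\UU$ and $\mathcal{C}=\oplus_i(\C[\partial]\TT^{+i}\oplus\C[\partial]\TT^{-i})$, exploits $[\xi_\lambda\xi]=0$ for $\xi=\TT^{\pm i},\UU$ and compares $\lambda$-coefficients (using that $\widehat{D}$ is a domain) to kill all $\LL$-components, so $\varphi(\mathcal{B}_{\widehat{\mathcal{D}}})\subseteq\mathcal{B}_{\widehat{\mathcal{D}}}$; then $\epsilon_{ijk}\varphi(\TT^{\pm k})=\varphi(\TT^{\pm i})_{(0)}\varphi(\TT^{\pm j})\in(\mathcal{B}_{\widehat{\mathcal{D}}})_{(0)}(\mathcal{B}_{\widehat{\mathcal{D}}})\subseteq\mathcal{C}_{\widehat{\mathcal{D}}}$ shows $\varphi$ preserves $\mathcal{C}_{\widehat{\mathcal{D}}}\cong\mathrm{Cur}(\frak{sl}_2\oplus\frak{sl}_2)_{\widehat{\mathcal{D}}}$, and only then does the rigidity theorem \cite[Theorem~3.4]{KLP} yield $A,B\in\mathbf{GL}_2(\widehat{D})$ together with the keep/swap dichotomy that produces your $\epsilon$. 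Without an argument of this kind your steps (i)--(ii) never get started; acknowledging the obstacle is not the same as settling it.

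A secondary, repairable inaccuracy: step (iii) as stated does not follow. The relations $[\LL_\lambda\TT^{\pm}(X)]=(\partial+\lambda)\TT^{\pm}(X)$ and $[\UU_\lambda\TT^{\pm}(X)]=0$ --- indeed the whole even part --- cannot force $\Phi(\UU)=\UU$, since rescaling $\UU$ by a nonzero constant is compatible with every even bracket; they also leave room for higher $\widehat{\partial}$-powers of $\UU$ in $\Phi(\LL)$. In the paper the normalization comes from the odd part: at the corresponding stage one only gets $\psi(\UU)=\UU\otimes f$ with $f$ a unit, and it is the brackets $[\UU_\lambda\GG(M)]=\lambda\QQ(M)$ and $[\TT^{\pm}(X)_\lambda\GG(M)]$ that force $f=1$ and the shape of $\psi$ on $\GG,\QQ$, after which the $0$-th products $\QQ(M)_{(0)}\GG(N)$ and $\GG(M)_{(0)}\GG(N)$ pin down the sign $g=\pm1$ and $\psi(\LL)=\LL+\UU\otimes g_0$. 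So the even-first/odd-later ordering you propose does not deliver what you claim at each stage, though this part can be fixed by interleaving the odd relations as the paper does.
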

\begin{proof}
From Lemma~\ref{autoSL}-\ref{autoCOM}, there is a group homomorphism:
$$\iota:\left(\mathbf{SL}_2(\widehat{D})\times\mathbf{SL}_2(\widehat{D})\times\mathbf{G}_a(\widehat{D})\right)
\rtimes\Bbb Z/2\Bbb Z\rightarrow
\mathrm{Aut}_{\widehat{\mathcal{D}}\text{-conf}}(\mathcal{A}_{\widehat{\mathcal{D}}}),
\quad (A,B,f,\varepsilon)\mapsto\theta_{A,B}\circ\tau_f\circ\omega^{\varepsilon}.$$

We claim that $\iota$ is surjective, which is equivalent to say that every $\varphi\in\mathrm{Aut}_{\widehat{\mathcal{D}}\text{-conf}}(\mathcal{A}_{\widehat{\mathcal{D}}})$ is of the form $\theta_{A,B}\circ\tau_f\circ\omega^{\varepsilon}$ for some $A,B\in\mathbf{SL}_2(\widehat{D})$, $f\in\widehat{D}$, and $\varepsilon\in\{0,1\}$.

To prove the claim, we first consider the action of $\varphi$ on the even part $(\mathcal{A}_{\widehat{\mathcal{D}}})_{\even}$ of $\mathcal{A}_{\widehat{\mathcal{D}}}$. We observe
$$\mathcal{A}_{\even}=\mathbb{C}[\partial]\LL\oplus\mathcal{C}\oplus\mathbb{C}[\partial]\UU,$$
where $\mathcal{C}=\oplus_{i=1}^3(\Bbb C[\partial]\TT^{+i}\oplus\Bbb C[\partial]\TT^{-i})$.
Moreover, $\mathcal{B}:=\mathcal{C}\oplus\Bbb{C}[\partial]\UU$ is an ideal of $\mathcal{A}_{\even}$ and $\mathcal{B}$ is isomorphic to the current Lie conformal algebra $\mathrm{Cur}(\frak{sl}_2(\Bbb C)\oplus\frak{sl}_2(\Bbb C)\oplus \Bbb C)$ (see \cite[Example~2.7]{K} for its definition).

Next we show that $\varphi(\mathcal{B}_{\widehat{\mathcal{D}}})\subseteq\mathcal{B}_{\widehat{\mathcal{D}}}$. Let $\xi=\TT^{\pm i}, i=1,2,3$ or $\UU$. Write
$$\varphi(\xi)=\sum\limits_{m=0}^K\widehat{\partial}^m(\LL\otimes f_m)+\xi',$$
where $r_i\in\widehat{D}$ and $\xi'\in\mathcal{B}_{\widehat{\mathcal{D}}}$. Then
\begin{align*}
0&=\varphi([\xi_\lambda\xi])=[\varphi(\xi)_\lambda\varphi(\xi)]\\
&=\sum\limits_{m,n=0}^K(-\lambda)^m(\widehat{\partial}+\lambda)^n((\partial+2\lambda)\LL\otimes f_mf_n+2\LL\otimes\delta_t(f_m)f_n)\\
&\quad+\sum\limits_{m=0}^K(-\lambda)^m[(\LL\otimes f_m)_\lambda\xi']+\sum\limits_{n=0}^K(\widehat{\partial}+\lambda)^n[{\xi'}_\lambda(\LL\otimes f_n)]+[{\xi'}_\lambda\xi']
\end{align*}
Since $[(\LL\otimes f_m)_\lambda\xi'], [{\xi'}_\lambda(\LL\otimes f_n)], [{\xi'}_\lambda\xi']\in \Bbb C [\lambda]\otimes_{\Bbb C}\mathcal{B}_{\widehat{\mathcal{D}}}$, we deduce that
$$0=\sum\limits_{m,n=0}^K(-\lambda)^m(\widehat{\partial}+\lambda)^n((\partial+2\lambda)\LL\otimes f_mf_n+2\LL\otimes\delta_t(f_m)f_n).$$
Comparing the coefficients of $\lambda$ and noting that $\widehat{D}$ is an integral domain, we obtain $K=0$ and $f_0=0$, i.e., $\varphi(\xi)=\xi'\in\mathcal{B}_{\widehat{\mathcal{D}}}$. Since $\TT^{\pm i},i=1,2,3$ and $\UU$ generate $\mathcal{B}$ as a $\Bbb{C}[\partial]$--module and $\varphi$ is a $\widehat{D}$--module homomorphism satisfying $\varphi\circ\widehat{\partial}=\widehat{\partial}\circ\varphi$, we conclude that $\varphi(\mathcal{B}_{\widehat{\mathcal{D}}})\subseteq\mathcal{B}_{\widehat{\mathcal{D}}}$.

Furthermore, we deduce from ${\TT^{\pm i}}_{(0)}\TT^{\pm j}=\epsilon_{ijk}\TT^{\pm k}$ that
$$\epsilon_{ijk}\varphi(\TT^{\pm k})=\varphi(\TT^{\pm i})_{(0)}\varphi(\TT^{\pm j})\in (\mathcal{B}_{\widehat{\mathcal{D}}})_{(0)}(\mathcal{B}_{\widehat{\mathcal{D}}})\subseteq\mathcal{C}_{\widehat{\mathcal{D}}},$$
for $k=1,2,3$. It yields that $\varphi(\mathcal{C}_{\widehat{\mathcal{D}}})\subseteq\mathcal{C}_{\widehat{\mathcal{D}}}$.

Therefore, the restriction $\varphi|_{\mathcal{C}_{\widehat{\mathcal{D}}}}$ is an automorphism of $\mathcal{C}_{\widehat{\mathcal{D}}}$. It is known that $$\mathcal{C}_{\widehat{\mathcal{D}}}\cong\mathrm{Cur}(\frak{sl}_2(\Bbb{C})\oplus\frak{sl}_2(\Bbb{C}))_{\widehat{\mathcal{D}}}.$$ Given that $\frak{sl}_2(\Bbb{C})\oplus\frak{sl}_2(\Bbb{C})$ is a semisimple complex Lie algebra by \cite[Theorem~3.4]{KLP}, there are two elements $A,B\in\mathbf{GL}_2(\widehat{D})$ such that one of the two following conditions is satisfied
\begin{align}
\varphi(\TT^+(X))=\TT^+(AXA^{-1}),\text{ and }\varphi(\TT^-(X))=\TT^-(BXB^{-1}),\label{eq:keepsl2}\\
\varphi(\TT^+(X))=\TT^-(AXA^{-1}),\text{ and }\varphi(\TT^-(X))=\TT^+(BXB^{-1}).\label{eq:exsl2}
\end{align}
Since any unit of $\widehat{D}$ is a square there is no loss of generalization in assuming that $A,B\in\mathbf{SL}_2(\widehat{D})$. We take $\psi:=\varphi\circ\theta_{A,B}^{-1}$ if $\varphi$ satisfies (\ref{eq:keepsl2}), or $\psi:=\varphi\circ\omega\circ\theta_{A,B}^{-1}$ if $\varphi$ satisfies (\ref{eq:exsl2}). Then $\psi$ is also an automorphism of the $\widehat{\mathcal{D}}$--conformal superalgebra $\mathcal{A}_{\widehat{\mathcal{D}}}$ and always satisfies
\begin{equation}
\psi(\TT^+(X))=\TT^+(X),\text{ and }\psi(\TT^-(X))=\TT^-(X).\label{eq:fixsl2}
\end{equation}

To determine $\psi(\UU)$, we observe that $\Bbb{C}[\partial]\UU\otimes_{\Bbb{C}}\widehat{\mathcal{D}}$ is the center of $\mathcal{B}_{\widehat{\mathcal{D}}}$, which is preserved under $\psi$. Hence, $\psi(\UU)=P(\partial)\UU$, where $P(\partial)$ is a polynomial in the indeterminate $\partial$ with coefficient in $\widehat{D}$. Then the bijectivity of $\psi$ yields that $P(\partial)=f$ is an unit element in $\widehat{D}$, i.e., $\psi(\UU)=\UU\otimes f$ for an unit element $f\in\widehat{D}$.

Next we consider the action of $\psi$ on the odd part $(\mathcal{A}_{\widehat{\mathcal{D}}})_{\odd}$. Suppose
$$\psi(\GG(M))=\sum\limits_{m=0}^{K_1}\widehat{\partial}^m\GG(\nu_m(M))
+\sum\limits_{n=0}^{K_2}\widehat{\partial}^n\QQ(\nu'_n(M)),$$
where $\nu_m, \nu'_n:\Bbb{C}^{2\times2}\rightarrow\widehat{D}^{2\times2}$ are $\Bbb{C}$--linear maps. Then $\psi([\UU_\lambda\GG(M)])=[\psi(\UU)_\lambda\psi(\GG(M))]$ yields
$$\lambda\psi(\QQ(M))=\sum\limits_{m=0}^{K_1}(\widehat{\partial}+\lambda)^m(\lambda\QQ(r\nu_m(M))+\QQ(\delta_t(f)\nu_m(M))).$$
Comparing the coefficients of $\lambda$, we obtain $K_1=0$, i.e.,
\begin{align*}
\psi(\GG(M))&=\GG(\nu_0(M))+\sum\limits_{n=0}^{K_2}\widehat{\partial}^n\QQ(\nu'_n(M)),\\
\psi(\QQ(M))&=\QQ(f\nu_0(M)).
\end{align*}
Similarly, we deduce from $\psi([\TT^+(X)_\lambda\GG(M)])=[\psi(\TT^+(X))_\lambda\psi(\GG(M))]$ that $K_2=0$ and
\begin{equation}
\nu_0(XM)=X\nu_0(M), \quad\nu'_0(XM)=X\nu'_0(M),\quad f\nu_0(XM)=X\nu_0(M),\label{matrix1}
\end{equation}
for $X\in\frak{sl}_2(\Bbb{C}), M\in\Bbb{C}^{2\times2}$.  Further, $\psi([\TT^-(X)_\lambda\GG(M)])=[\psi(\TT^-(X))_\lambda\psi(\GG(M))]$ yields that
\begin{equation}
\nu_0(MX)=\nu_0(M)X, \quad\nu'_0(MX)=\nu'_0(M)X, \quad f\nu_0(MX)=\nu_0(M)X,\label{matrix2}
\end{equation}
for $X\in\frak{sl}_2(\Bbb{C}), M\in\Bbb{C}^{2\times2}$.

From (\ref{matrix1}) and (\ref{matrix2}), we conclude that $f=1$, and there are $g,g'\in\widehat{D}$ such that $\nu_0(M)=gM$ and $\nu_0'(M)=g'M$, i.e.,
\begin{align*}
\psi(\GG(M))=\GG(gM)+\QQ(g'M),\quad\psi(\QQ(M))=\QQ(gM),\quad\psi(\UU)=\UU.
\end{align*}

Finally, the equality
$$\psi(\QQ(M))_{(0)}\psi(\GG(N))=\psi(\QQ(M)_{(0)}\GG(N))$$
implies that $g=\pm1$; while the equality
$$\psi(\GG(M))_{(0)}\psi(\GG(N))=\psi(\GG(M)_{(0)}\GG(N))$$
yields $\psi(\LL)=\LL+\UU\otimes gg'$. Hence,
\begin{align}
\psi(\LL)=\LL+\UU\otimes g_0,\quad
\psi(\GG(M))=g(\GG(M)+\QQ(g_0M)),\quad
\psi(\QQ(M))=g\QQ(M),\quad
\psi(\UU)=\UU,\label{eq:odd}
\end{align}
where $g_0=gg'$. Summarizing (\ref{eq:fixsl2}) and (\ref{eq:odd}), we conclude that $\psi=\tau_{g_0}\circ\theta_{I,gI}$. Hence,
$$\varphi=\tau_{g_0}\circ\theta_{A,g B}=\theta_{A,g B}\circ\tau_{g_0},\text{ or }\varphi=\tau_{g_0}\circ\theta_{A,g B}\circ\omega=\theta_{A,g B}\circ\tau_{g_0}\circ\omega.$$
We complete the proof of the surjectivity of $\iota$.

Next we will determine the kernel of $\iota$. On one hand, it is obvious that $(-I_2,-I_2,0,0)\in\ker\iota_1$. On the other hand, we will show $(A,B,f,\varepsilon)\in\ker\iota$ will lead to $A=B=\pm I_2, f=0$ and $\varepsilon=0$. In fact, $(A,B,f,\varepsilon)\in\ker\iota$ is equivalent to $\theta_{A,B}\circ\tau_f\circ\omega^{\varepsilon}=\mathrm{id}$. Hence,
$$\UU=\theta_{A,B}\circ\tau_f\circ\omega^{\varepsilon}(\UU)=(-1)^{\varepsilon}\UU,$$
where $\varepsilon=0$ or $1$. It follows $\varepsilon=0$.

Similarly, $\theta_{A,B}\circ\tau_f(\LL)=\LL$ yields that $f=0$, and so we have
\begin{align*}
\theta_{A,B}(\TT^+(X))&=\TT^+(AXA^{-1})=\TT^+(X),\\
\theta_{A,B}(\TT^-(X))&=\TT^-(BXB^{-1})=\TT^-(X), \\
\theta_{A,B}(\QQ(M))&=\QQ(AMB^{-1})=\QQ(M),
\end{align*}
for all $X\in\frak{sl}_2(\mathbb{C})$ and all $M\in\mathbb{C}^{2\times2}$, i.e.,
$$AX=XA, BX=XB,\text{ and }AM=MB,$$
for all $X\in\frak{sl}_2(\mathbb{C})$ and all $M\in\mathbb{C}^{2\times2}$. This yields that $A=B=\pm I_2$. Hence, $\ker\iota=\langle(-I_2,-I_2,0,0)\rangle$. Therefore, $\iota$ induces a group isomorphism
\begin{align*}
\mathrm{Aut}_{\widehat{\mathcal{D}}\text{-conf}}(\mathcal{A}_{\widehat{\mathcal{D}}})&\cong
\frac{\left(\mathbf{SL}_2(\widehat{D})\times\mathbf{SL}_2(\widehat{D})\times\mathbf{G}_a(\widehat{D})\right)
\rtimes\Bbb{Z}/2\Bbb{Z}}{\langle(-I_2,-I_2,0,0)\rangle}\\
&\cong\left(\frac{\mathbf{SL}_2(\widehat{D})\times\mathbf{SL}_2(\widehat{D})}{\langle(-I_2,-I_2)\rangle}
\times\mathbf{G}_a(\widehat{D})\right)\rtimes\Bbb Z/2\Bbb Z.
\end{align*}
\end{proof}

\begin{remark}
\label{rmk:autoC}
The above theorem gives a precise description of the automorphism group of the $\widehat{\mathcal{D}}$--conformal superalgebra $\mathcal{A}_{\widehat{\mathcal{D}}}$. Using the same reasoning, we also can obtain the automorphism group of the $\Bbb{C}$--conformal superalgebra $\mathcal{A}$. In fact,
$$\mathrm{Aut}_{\Bbb{C}\text{-conf}}(\mathcal{A})\cong
\left(\frac{\mathbf{SL}_2(\Bbb{C})\times\mathbf{SL}_2(\Bbb{C})}{\langle(-I_2,-I_2)\rangle}
\times\mathbf{G}_a(\Bbb{C})\right)\rtimes\Bbb Z/2\Bbb Z.$$
\end{remark}

\section{Classification of twisted loop conformal superalgebras}
\label{sec:twistedform}

The classification of twisted loop conformal superalgebras based on $\mathcal{A}$ will be completed in this section. We firstly compute the non-abelian cohomology set $\mathrm{H}^1_{\mathrm{ct}}\left(\widehat{\mathbb{Z}},
\mathrm{Aut}_{\widehat{\mathcal{D}}\text{-conf}}(\mathcal{A}_{\widehat{\mathcal{D}}})\right)$, which yields the classification of twisted loop conformal superalgebras based on $\mathcal{A}$ up to isomorphisms of differential conformal superalgebras over $\mathcal{D}$. Then we deduce the classification up to isomorphisms of conformal superalgebras over $\Bbb{C}$ through  centroid considerations as in \cite{CP2011} and \cite{KLP}.

\begin{proposition}
\label{prop:Diso}
Every $\widehat{\mathcal{D}}/\mathcal{D}$--form of $\mathcal{A}_{\mathcal{D}}$ is isomorphic to either $\mathcal{L}(\mathcal{A},\mathrm{id})$ or $\mathcal{L}(\mathcal{A},\omega)$ as differential conformal superalgebras over $\mathcal{D}$.
\end{proposition}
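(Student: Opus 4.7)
The plan is to apply the classification machinery from \cite{KLP}: $\widehat{\mathcal{D}}/\mathcal{D}$-forms of $\mathcal{A}_{\mathcal{D}}$ are in bijection with the non-abelian continuous cohomology set $H^1_{\mathrm{ct}}(\widehat{\mathbb{Z}}, G)$, where $G := \mathrm{Aut}_{\widehat{\mathcal{D}}\text{-conf}}(\mathcal{A}_{\widehat{\mathcal{D}}})$. By Theorem~\ref{autoTHM}, $G$ fits into a split short exact sequence
\begin{equation*}
1 \longrightarrow N \longrightarrow G \longrightarrow \mathbb{Z}/2\mathbb{Z} \longrightarrow 1,
\end{equation*}
with $N = \bigl((\mathbf{SL}_2(\widehat{D}) \times \mathbf{SL}_2(\widehat{D}))/\langle(-I_2,-I_2)\rangle\bigr) \times \mathbf{G}_a(\widehat{D})$ and the quotient generated by the class of $\omega$. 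The associated exact sequence of pointed sets reduces the problem to showing that $H^1_{\mathrm{ct}}(\widehat{\mathbb{Z}}, N)$ vanishes, and, via the standard twisting argument, that $H^1_{\mathrm{ct}}(\widehat{\mathbb{Z}}, {}^{c}N)$ also vanishes for any cocycle $c$ mapping to the nontrivial class in $H^1_{\mathrm{ct}}(\widehat{\mathbb{Z}}, \mathbb{Z}/2\mathbb{Z}) \cong \mathbb{Z}/2\mathbb{Z}$. Together these force $H^1_{\mathrm{ct}}(\widehat{\mathbb{Z}}, G)$ to be in bijection with $\mathbb{Z}/2\mathbb{Z}$.

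The untwisted vanishing relies on three ingredients. Additive Hilbert~90 gives $H^1_{\mathrm{ct}}(\widehat{\mathbb{Z}}, \mathbf{G}_a(\widehat{D})) = 0$. The Hilbert-90-type vanishing for simply connected semisimple groups over $\widehat{D}$, as already employed in \cite{KLP, CP2011}, yields $H^1_{\mathrm{ct}}(\widehat{\mathbb{Z}}, \mathbf{SL}_2(\widehat{D})) = 1$ and hence the same for the direct product $\mathbf{SL}_2(\widehat{D}) \times \mathbf{SL}_2(\widehat{D})$. The central quotient by $\langle(-I_2,-I_2)\rangle$ is handled through the central extension by $\mu_2$, using the associated long exact sequence of pointed sets together with Kummer-theoretic control of $H^2_{\mathrm{ct}}(\widehat{\mathbb{Z}}, \mu_2(\widehat{D}))$.

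For the twisted version one must compute $H^1_{\mathrm{ct}}(\widehat{\mathbb{Z}}, {}^{\omega}N)$, where the $\widehat{\mathbb{Z}}$-action on $N$ is modified by conjugation by a cocycle representative of $\omega$. Since $\omega$ swaps the two $\mathbf{SL}_2$-factors and negates the $\mathbf{G}_a$-factor, the twisted form of $\mathbf{SL}_2(\widehat{D}) \times \mathbf{SL}_2(\widehat{D})$ becomes a Weil restriction of the shape $\mathrm{Res}_{E/D}\mathbf{SL}_2$ for a suitable quadratic \'etale extension $E/D$, whose $H^1$ vanishes by Shapiro's lemma combined with Hilbert~90 for $\mathbf{SL}_2$ over $E$; the $\mathbf{G}_a$-factor again yields zero by additive Hilbert~90. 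This is the step I expect to be the main obstacle, as it requires carefully tracking how $\omega$ and the Galois action of $\widehat{\mathbb{Z}}$ on $\widehat{D}$ interact through the central quotient by $\langle(-I_2,-I_2)\rangle$.

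Finally, to match the two cohomology classes with $\mathcal{L}(\mathcal{A}, \mathrm{id})$ and $\mathcal{L}(\mathcal{A}, \omega)$, I would trace the explicit bijection of \cite{KLP}: the distinguished class corresponds to the untwisted loop algebra $\mathcal{L}(\mathcal{A}, \mathrm{id})$, while the cocycle sending a topological generator of $\widehat{\mathbb{Z}}$ to $\omega$ (a genuine $1$-cocycle because $\omega^2 = \mathrm{id}$, by Lemma~\ref{autoINV}) produces the form $\mathcal{L}(\mathcal{A}, \omega)$.
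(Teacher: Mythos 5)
Your proposal is correct and follows the same skeleton as the paper's proof: the reduction to $\mathrm{H}^1_{\mathrm{ct}}\bigl(\widehat{\mathbb{Z}},\mathrm{Aut}_{\widehat{\mathcal{D}}\text{-conf}}(\mathcal{A}_{\widehat{\mathcal{D}}})\bigr)$ via \cite{KLP}, the split exact sequence coming from Theorem~\ref{autoTHM}, the count of the two fibers of the map to $\mathrm{H}^1_{\mathrm{ct}}(\widehat{\mathbb{Z}},\mathbb{Z}/2\mathbb{Z})$ by the twisting argument, and the identification of the two classes with $\mathcal{L}(\mathcal{A},\mathrm{id})$ and $\mathcal{L}(\mathcal{A},\omega)$. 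Where you genuinely diverge is in the twisted fiber. For the semisimple part, you recognize the $\omega$-twist of $\mathbf{SL}_2(\widehat{D})\times\mathbf{SL}_2(\widehat{D})$ (which swaps the factors, by Lemma~\ref{autoCOM}(ii)) as the Weil restriction $\mathrm{Res}_{D_2/D}\mathbf{SL}_2$ for the quadratic extension $D_2=\mathbb{C}[t^{\pm1/2}]$ and invoke Shapiro plus vanishing over $D_2$; the paper instead just observes that the twisted group is still a semisimple (simply connected) group scheme over $D$ and applies the same vanishing theorem of \cite{P} via \cite{GP}, so your route is more explicit but proves the same thing. For the additive part, your direct ``additive Hilbert 90'' (finite-group cohomology with values in a $\mathbb{C}$-vector space is killed by the group order and divisible, hence zero, twisted or not) is in fact simpler than the paper's argument, which first untwists ${}_{\mathfrak{z}}\mathbf{G}_a$ using $\mathrm{H}^1_{\text{\'et}}(D,\mathbf{G}_{\mathrm{m}})=\mathrm{Pic}(D)=0$ and then quotes $\mathrm{H}^1_{\text{\'et}}(D,\mathbf{G}_a)=0$. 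Finally, the step you flag as ``the main obstacle''---passing through the central quotient by $\langle(-I_2,-I_2)\rangle$ in the twisted setting---is not actually delicate: since $\omega$ conjugation fixes $(-I_2,-I_2)$, the twisted central subgroup is the untwisted $\mathbb{Z}/2\mathbb{Z}$, so the very same exact sequence of pointed sets you used in the untwisted case applies verbatim, with $\mathrm{H}^1$ of the twisted simply connected group and $\mathrm{H}^2_{\mathrm{ct}}(\widehat{\mathbb{Z}},\mathbb{Z}/2\mathbb{Z})$ both vanishing as before; this is exactly how the paper closes that step, so you should simply carry out that bookkeeping rather than leave it open.
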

\begin{proof}
Based on \cite[Theorem~2.16 and Proposition~2.29]{KLP}, $\widehat{\mathcal{D}}/\mathcal{D}$--forms of $\mathcal{A}_{\mathcal{D}}$ are parameterized by the continuous non-abelian cohomology set $\mathrm{H}^1_{\mathrm{ct}}\left(\widehat{\mathbb{Z}},\mathrm{Aut}_{\widehat{\mathcal{D}}\text{-conf}}(\mathcal{A}_{\widehat{\mathcal{D}}})\right)$, where $\widehat{\mathbb{Z}}:=\lim\limits_{\longleftarrow}\Bbb{Z}/m\Bbb{Z}$ and the continuous action of $\widehat{\Bbb{Z}}$ on $\mathrm{Aut}_{\widehat{\mathcal{D}}\text{-conf}}(\mathcal{A}_{\widehat{\mathcal{D}}})$ is induced by the continuous action of $\widehat{\Bbb{Z}}$ on $\widehat{D}$ given by ${}^1t^{p/q}=\zeta_q^pt^{p/q}$. Hence, the crucial point of the proof is to compute the cohomology set $\mathrm{H}^1_{\mathrm{ct}}\left(\widehat{\mathbb{Z}},\mathrm{Aut}_{\widehat{\mathcal{D}}\text{-conf}}(\mathcal{A}_{\widehat{\mathcal{D}}})\right)$.

By Theorem~\ref{autoTHM}, there is a split short exact sequence of groups
\begin{equation}
1\rightarrow\mathrm{G}\rightarrow\mathrm{Aut}_{\widehat{\mathcal{D}}\text{-conf}}(\mathcal{A}_{\widehat{\mathcal{D}}})
\rightarrow\Bbb{Z}/2\Bbb{Z}\rightarrow1,\label{exact}
\end{equation}
where $$\mathrm{G}:=\mathrm{G}_1
\times\mathbf{G}_a(\widehat{D}),\text{ and }\mathrm{G}_1:=\frac{\mathbf{SL}_2(\widehat{D})\times\mathbf{SL}_2(\widehat{D})}{\langle(-I_2,-I_2)\rangle}.$$
We observe that $\widehat{\Bbb{Z}}$ continuously acts on $\mathrm{G}$ through the action on $\widehat{D}$ and $\widehat{\Bbb{Z}}$ acts on $\Bbb{Z}/2\Bbb{Z}$ trivially. With these $\widehat{\Bbb{Z}}$--actions, the homomorphisms in (\ref{exact}) are all $\widehat{\Bbb{Z}}$--equivariant. Hence, the exact sequence (\ref{exact}) induces an exact sequence of continuous non-abelian cohomology sets
\begin{equation}
\mathrm{H}^1_{\mathrm{ct}}(\widehat{\Bbb{Z}},\mathrm{G})\longrightarrow
\mathrm{H}^1_{\mathrm{ct}}\left(\widehat{\Bbb{Z}},
\mathrm{Aut}_{\widehat{\mathcal{D}}\text{-conf}}(\mathcal{A}_{\widehat{\mathcal{D}}})\right)
\overset{\rho}{\longrightarrow}\mathrm{H}^1_{\mathrm{ct}}(\widehat{\Bbb{Z}},\Bbb{Z}/2\Bbb{Z}).
\label{eq:exact2}
\end{equation}

Since the exact sequence (\ref{exact}) is split, $\rho$ has a section, and so $\rho$ is surjective. Recall that $\widehat{\Bbb{Z}}$ acts on $\Bbb{Z}/2\Bbb{Z}$ trivially, we have $\mathrm{H}^1_{\mathrm{ct}}(\widehat{\Bbb{Z}},\Bbb{Z}/2\Bbb{Z})\cong\Bbb{Z}/2\Bbb{Z}=\{[0],[1]\}$.
Since (\ref{eq:exact2}) is exact, the fiber of $\rho$ over $[0]$ is measured by $\mathrm{H}^1_{\mathrm{ct}}(\widehat{\Bbb{Z}},\mathrm{G})$. To compute $\mathrm{H}^1_{\mathrm{ct}}(\widehat{\Bbb{Z}},\mathrm{G})$, we observe that $\widehat{\Bbb{Z}}$ piecewise acts on $\mathrm{G}=\mathrm{G}_1\times\mathbf{G}_a(\widehat{D})$. It follows
\begin{equation}\mathrm{H}^1_{\mathrm{ct}}(\widehat{\Bbb{Z}},\mathrm{G})=\mathrm{H}^1_{\mathrm{ct}}(\widehat{\Bbb{Z}},\mathrm{G}_1)
\times \mathrm{H}^1_{\mathrm{ct}}(\widehat{\Bbb{Z}},\mathbf{G}_a(\widehat{D})).\label{eq:ntch1}
\end{equation}
The group $\mathrm{G}_1$ fits into an exact sequence of groups
$$1\rightarrow\Bbb{Z}/2\Bbb{Z}\rightarrow \mathbf{SL}_2(\widehat{D})\times\mathbf{SL}_2(\widehat{D})\rightarrow\mathrm{G}_1\rightarrow1.$$
It yields an exact sequence of pointed sets
$$\mathrm{H}^1_{\mathrm{ct}}(\widehat{\Bbb{Z}},\Bbb{Z}/2\Bbb{Z})
\rightarrow\mathrm{H}^1_{\mathrm{ct}}(\widehat{\Bbb{Z}},\mathbf{SL}_2(\widehat{D})\times\mathbf{SL}_2(\widehat{D}))
\rightarrow\mathrm{H}^1_{\mathrm{ct}}(\widehat{\Bbb{Z}},\mathrm{G}_1)
\rightarrow\mathrm{H}^2_{\mathrm{ct}}(\widehat{\Bbb{Z}},\Bbb{Z}/2\Bbb{Z}).$$

Since $\mathbf{SL}_2\times\mathbf{SL}_2$ is a semi-simple group scheme, $\mathrm{H}^1_{\mathrm{ct}}\left(\widehat{\Bbb{Z}},\mathbf{SL}_2(\widehat{D})\times\mathbf{SL}_2(\widehat{D})\right)$ can be identified with the non-abelian \'{e}tale cohomology $\mathrm{H}^1_{\text{\'{e}t}}(D,\mathbf{SL}_2\times\mathbf{SL}_2)$ by \cite[Corollary~2.16]{GP}, which vanishes according to \cite[Theorem~3.1]{P}. Hence, $\mathrm{H}^1_{\mathrm{ct}}\left(\widehat{\Bbb{Z}},\mathbf{SL}_2(\widehat{D})\times\mathbf{SL}_2(\widehat{D})\right)=0$. $\mathrm{H}^2_{\mathrm{ct}}(\widehat{\Bbb{Z}},\Bbb{Z}/2\Bbb{Z})$ also vanishes since it can be identified with $\mathrm{H}^2_{\text{\'{e}t}}(D,\boldsymbol{\mu}_2)$, which is  the 2-torsion of the Brauer group $\mathrm{H}^2_{\text{\'{e}t}}(D,\mathbf{G}_m) = 1$. Therefore, \begin{equation}\mathrm{H}^1(\widehat{\Bbb{Z}},\mathrm{G}_1)=0.\label{eq:ntch2}\end{equation}

From \cite[I.2.2, Proposition~8]{S}, we deduce that $\mathrm{H}^1_{\mathrm{ct}}(\widehat{\Bbb{Z}},\mathbf{G}_a(\widehat{D}))
=\lim\limits_{\longrightarrow}\mathrm{H}^1_{\mathrm{ct}}(\Bbb{Z}/m\Bbb{Z},\mathbf{G}_a(D_m))$. Since $D_m/D$ is a Galois extension with Galois group $\Bbb{Z}/m\Bbb{Z}$, $\mathrm{H}^1_{\mathrm{ct}}(\Bbb{Z}/m\Bbb{Z},\mathbf{G}_a(D_m))=\mathrm{H}^1_{\text{\'{e}t}}(D_m/D,\mathbf{G}_{a,D})$ (see \cite[Remark~2.27]{KLP} for details). Now, $\mathrm{H}^1_{\text{\'{e}t}}(D_m/D,\mathbf{G}_{a,D})$ can be viewed as a subset of $\mathrm{H}^1_{\text{\'{e}t}}(D,\mathbf{G}_{a})$, which vanishes because because our base scheme, namely $\rm{Spec}(\it D)$, is affine (see \cite{DG} III.4.6.6). Hence,
\begin{equation}\mathrm{H}^1_{\mathrm{ct}}(\widehat{\Bbb{Z}},\mathbf{G}_a(\widehat{D}))=0.\label{eq:ntch3}\end{equation}
Summarizing (\ref{eq:ntch1}), (\ref{eq:ntch2}), and (\ref{eq:ntch3}), we obtain
$\mathrm{H}^1_{\mathrm{ct}}(\widehat{\Bbb{Z}},\mathrm{G})=0$, i.e., the fiber of $\rho$ over $[0]$ contains exactly one element.

Next we consider the fiber of $\rho$ over $[1]$. Twisting the $\widehat{\Bbb{Z}}$--groups in (\ref{exact}) with respect to the cocycle $\frak{z}:\widehat{\Bbb{Z}}\mapsto\mathrm{Aut}_{\widehat{\mathcal{D}}\text{-conf}}(\mathcal{A}_{\widehat{\mathcal{D}}}), 1\mapsto \omega$, we deduce that the fiber of $\rho$ over $[1]$ is measured by $\mathrm{H}^1_{\mathrm{ct}}(\widehat{\Bbb{Z}},{}_{\frak{z}}\mathrm{G})$. As $\widehat{\Bbb{Z}}$--groups, ${}_{\frak{z}}\mathrm{G}={}_{\frak{z}}\mathrm{G}_1\times{}_{\frak{z}}\mathbf{G}_a(\widehat{D})$. Hence, we also have
\begin{equation}
\mathrm{H}^1_{\mathrm{ct}}(\widehat{\Bbb{Z}},{}_{\frak{z}}\mathrm{G})= \mathrm{H}^1_{\mathrm{ct}}(\widehat{\Bbb{Z}},{}_{\frak{z}}\mathrm{G}_1)\times \mathrm{H}^1_{\mathrm{ct}}(\widehat{\Bbb{Z}},{}_{\frak{z}}\mathbf{G}_a(\widehat{D})).\label{eq:tch1}
\end{equation}

To compute $\mathrm{H}^1_{\mathrm{ct}}(\widehat{\Bbb{Z}},{}_{\frak{z}}\mathrm{G}_1)$, we also have an exact sequence
$$1\rightarrow{}_{\frak{z}}(\Bbb{Z}/2\Bbb{Z})\rightarrow {}_{\frak{z}}(\mathbf{SL}_2(\widehat{D})\times\mathbf{SL}_2(\widehat{D}))\rightarrow{}_{\frak{z}}\mathrm{G}_1\rightarrow1.$$
Since $\omega$ trivially acts on the subgroup $\langle(-I_2,-I_2)\rangle$ of $\mathbf{SL}_2(\widehat{D})\times\mathbf{SL}_2(\widehat{D})$, it follows ${}_{\frak{z}}(\Bbb{Z}/2\Bbb{Z})=\Bbb{Z}/2\Bbb{Z}$. Hence, there is a long exact sequence
$$\mathrm{H}^1_{\mathrm{ct}}(\widehat{\Bbb{Z}},\Bbb{Z}/2\Bbb{Z})
\rightarrow\mathrm{H}^1_{\mathrm{ct}}(\widehat{\Bbb{Z}},{}_{\frak{z}}(\mathbf{SL}_2(\widehat{D})\times\mathbf{SL}_2(\widehat{D})))
\rightarrow\mathrm{H}^1_{\mathrm{ct}}(\widehat{\Bbb{Z}},{}_{\frak{z}}\mathrm{G}_1)
\rightarrow\mathrm{H}^2_{\mathrm{ct}}(\widehat{\Bbb{Z}},\Bbb{Z}/2\Bbb{Z}).$$
We have seen that $\mathrm{H}^2_{\mathrm{ct}}(\widehat{\Bbb{Z}},\Bbb{Z}/2\Bbb{Z})=0$. Further, by the same reasons given above $\mathrm{H}^1_{\mathrm{ct}}(\widehat{\Bbb{Z}},{}_{\frak{z}}(\mathbf{SL}_2(\widehat{D})\times\mathbf{SL}_2(\widehat{D})))$ can be identified with the non-abelian \'{e}tale cohomology $\mathrm{H}^1_{\text{\'{e}t}}(D,{}_{\frak{z}}(\mathbf{SL}_2\times\mathbf{SL}_2))$, which vanishes since ${}_{\frak{z}}(\mathbf{SL}_2\times\mathbf{SL}_2)$ is also a reductive group scheme over $D$.
Hence,
\begin{equation}
\mathrm{H}^1_{\mathrm{ct}}(\widehat{\Bbb{Z}},{}_{\frak{z}}\mathrm{G}_1)=0.\label{eq:tch2}
\end{equation}

To understand $\mathrm{H}^1_{\mathrm{ct}}(\widehat{\Bbb{Z}},{}_{\frak{z}}\mathbf{G}_a(\widehat{D}))$, we first observe that
${}_{\frak{z}}\mathbf{G}_a$ is a twisted form of $\mathbf{G}_a$ (more precisely of the $D$-group $\mathbf{G}_{a, D}$)  associated to the cocycle $\frak{z'}:\widehat{\Bbb{Z}}\rightarrow\mathrm{Aut}(\mathbf{G}_a(\widehat{D})),\bar{1}\mapsto-\mathrm{id},$ viewed in a natural way as an element of $\mathrm{H}^1_{\text{\'{e}t}}\big({\it D}, \mathbf{Aut}(\mathbf{G}_a)\big)$.  The natural ${\it D}$-group homomorphism $  \mathbf{G}_{\mathrm{m}} \rightarrow \mathbf{Aut}(\mathbf{G}_a)$ yields a map
$$\phi: \mathrm{H}^1_{\text{\'{e}t}}\big({\it D}, \mathbf{G}_m\big)
\rightarrow\mathrm\mathrm{H}^1_{\text{\'{e}t}}\big({\it D}, \mathbf{Aut}(\mathbf{G}_a)\big).$$
Since the class $[\frak{z}']$ of $\frak{z}'$ is visibly in the image of $\phi$ and $\mathrm{H}^1_{\text{\'{e}t}}(D,\mathbf{G}_{\mathrm{m}})= {\rm Pic}({\it D}) = 0,$
we deduce that ${}_{\frak{z}}\mathbf{G}_a$ is isomorphic to $\mathbf{G}_a$ (or rather $\mathbf{G}_{a, {\rm D}}$ to be precise). This  yields that
\begin{equation}
\mathrm{H}^1_{\mathrm{ct}}(\widehat{\Bbb{Z}},{}_{\frak{z}}\mathbf{G}_a(\widehat{D}))
\subset \mathrm{H}^1_{\text{\'{e}t}}(D, {_{\frak{z}}\mathbf{G}_a}) = \mathrm{H}^1_{\text{\'{e}t}}(D, \mathbf{G}_a)=0.\label{eq:tch3}
\end{equation}

From (\ref{eq:tch1}), (\ref{eq:tch2}), and (\ref{eq:tch3}), we deduce that $\mathrm{H}^1_{\mathrm{ct}}(\widehat{\Bbb{Z}},{}_{\frak{z}}\mathrm{G})=0$, i.e., the fiber of $\rho$ over $[1]$ also contains exactly one element.

Consequently, $\mathrm{H}^1_{\mathrm{ct}}\left(\widehat{\Bbb{Z}},
\mathrm{Aut}_{\widehat{\mathcal{D}}\text{-conf}}(\mathcal{A}_{\widehat{\mathcal{D}}})\right)$ contains exactly two elements, which correspond to
$\mathcal{L}(\mathcal{A},\mathrm{id})$ and $\mathcal{L}(\mathcal{A},\omega)$.
\end{proof}

Next, we proceed to compute the centroid of an arbitrary twisted loop conformal superalgebra based on $\mathcal{A}$. For an arbitrary conformal superalgebra $\mathcal{B}$ over $\Bbb{C}$, the centroid of $\mathcal{B}$ is
$$\mathrm{Ctd}_{\Bbb{C}}(\mathcal{B})=\{\chi\in\mathrm{End}_{\Bbb{C}\text{-smod}}(\mathcal{B})|\chi(a_{(n)}b)=a_{(n)}\chi(b),\forall a,b\in\mathcal{B},n\in\Bbb{Z}_+\},$$
where $\mathrm{End}_{\Bbb{C}\text{-smod}}(\mathcal{B})$ is the set of endomorphism of the super $\Bbb{C}$-module $\mathcal{B}$. If in addition $\mathcal{B}$ is also a differential conformal superalgebra over $\mathcal{D}$, there is a canonical map $D\rightarrow\mathrm{Ctd}_{\Bbb{C}}(\mathcal{B}), r\mapsto r_{\mathcal{B}}$, where $r_{\mathcal{B}}$ is the map $\mathcal{B}\rightarrow\mathcal{B},a\mapsto ra$ (see \cite{KLP} for details).

\begin{lemma}
\label{lem:weightvectors}
$\mathcal{A}_{\mathcal{D}_m}=
\mathrm{span}_{\mathbb{C}}\{v_{(0)}(\partial^{(\ell)}\LL\otimes1),v_{(1)}(\partial^{(\ell)}\LL\otimes1)|
v\in\mathcal{A}_{\mathcal{D}_m}\}$.
\end{lemma}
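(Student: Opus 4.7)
The plan is to reduce the assertion to a small collection of explicit bracket computations in $\mathcal{A}$. First, for $v = a\otimes f$ with $a\in\mathcal{A}$ and $f\in D_m$, the loop-product formula (\ref{eq:loopprod}) together with $\delta_t^{(j)}(1)=0$ for $j\geqslant 1$ collapses the product to a single surviving term,
\[ v_{(n)}(\partial^{(\ell)}\LL\otimes 1)=\bigl(a_{(n)}\partial^{(\ell)}\LL\bigr)\otimes f. \]
Consequently the $\mathbb{C}$-span on the right-hand side of the lemma equals $W\otimes_{\mathbb{C}}D_m$, where $W\subseteq\mathcal{A}$ is the $\mathbb{C}$-span of $\{a_{(n)}\partial^{(\ell)}\LL : a\in\mathcal{A},\ \ell\geqslant 0,\ n\in\{0,1\}\}$. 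Since $\mathcal{A}=\mathbb{C}[\partial]\otimes V$, it will suffice to prove that $\partial^{(k)}\xi\in W$ for every $\xi\in V$ and every $k\geqslant 0$.

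For the central computation I would exploit the observation that each generator $\xi\in V$ is a primary field for $\LL$; reading off the weights from Section~\ref{sec:conformalalgebra} one finds $\Delta_\LL=2$, $\Delta_\UU=\Delta_{\TT^{\pm i}}=1$, $\Delta_{\GG(M)}=3/2$ and $\Delta_{\QQ(M)}=1/2$. Conformal skew-symmetry (clean because $\LL$ is even) turns $[\LL_\mu\xi]=(\partial+\Delta_\xi\mu)\xi$ into $[\xi_\lambda\LL]=((\Delta_\xi-1)\partial+\Delta_\xi\lambda)\xi$, and the sesquilinearity rule $[\xi_\lambda\partial^{(\ell)}\LL]=\frac{(\partial+\lambda)^\ell}{\ell!}[\xi_\lambda\LL]$ then yields, after extracting the coefficients of $\lambda^0$ and $\lambda^1$, the compact formulas
\[ \xi_{(0)}\partial^{(\ell)}\LL=(\Delta_\xi-1)(\ell+1)\,\partial^{(\ell+1)}\xi,\qquad \xi_{(1)}\partial^{(\ell)}\LL=\bigl(\Delta_\xi(\ell+1)-\ell\bigr)\,\partial^{(\ell)}\xi. \]

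A case-by-case inspection of these scalar coefficients then completes the argument. For $\xi\in\{\UU,\TT^{\pm i}\}$ of weight $1$, the second formula degenerates to $\xi_{(1)}\partial^{(k)}\LL=\partial^{(k)}\xi$ and settles every $k\geqslant 0$. For $\xi\in\{\LL,\GG(M)\}$ both formulas have visibly nonzero coefficients: the case $n=1,\ell=0$ supplies $\Delta_\xi\,\xi$ at $k=0$, and the case $n=0,\ell=k-1$ supplies a nonzero multiple of $\partial^{(k)}\xi$ for $k\geqslant 1$. I expect the only delicate case to be $\xi=\QQ(M)$, where the $n=1$ coefficient $(1-\ell)/2$ vanishes precisely at $\ell=1$; this is exactly why \emph{both} the $n=0$ and the $n=1$ families must appear in the statement, since the otherwise missing $\partial\QQ(M)$ is recovered from $\QQ(M)_{(0)}\LL=-\frac12\partial\QQ(M)$. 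Combining all cases gives $\partial^{(k)}\xi\in W$ for every $\xi\in V$ and every $k\geqslant 0$, hence $W=\mathcal{A}$ and the lemma follows.
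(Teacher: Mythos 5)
Your opening reduction misreads the loop product (\ref{eq:loopprod}): the divided derivatives $\delta_t^{(j)}$ act on the coefficient $f$ of the \emph{left} factor, not on the coefficient $1$ of $\partial^{(\ell)}\LL\otimes 1$. Hence for $v=a\otimes f$ one has $v_{(n)}(\partial^{(\ell)}\LL\otimes1)=\sum_{j\geqslant0}\bigl(a_{(n+j)}\partial^{(\ell)}\LL\bigr)\otimes\delta_t^{(j)}(f)$, and this collapses to the single term $(a_{(n)}\partial^{(\ell)}\LL)\otimes f$ only when $f$ is constant. Consequently your identification of the right-hand span (call it $\mathcal{V}$) with $W\otimes_{\mathbb{C}}D_m$ is not justified: the inclusion you actually need, namely $(a_{(n)}\partial^{(\ell)}\LL)\otimes f\in\mathcal{V}$ for arbitrary $f\in D_m$, does not follow by taking $v=a\otimes f$, since that choice produces the wanted term only together with the corrections $(a_{(n+j)}\partial^{(\ell)}\LL)\otimes\delta_t^{(j)}(f)$ for $j\geqslant1$, while restricting to constant $f$ yields only $W\otimes1$. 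This is a genuine gap, because proving $W=\mathcal{A}$ inside $\mathcal{A}$ alone does not give $\mathcal{A}_{\mathcal{D}_m}\subseteq\mathcal{V}$.

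The computational core of your proposal is correct and is exactly what the paper uses: your two formulas for $\xi_{(0)}\partial^{(\ell)}\LL$ and $\xi_{(1)}\partial^{(\ell)}\LL$ are the identity $v_{(k)}\partial^{(\ell-1)}\LL=(\ell(\Delta-1)+k)\partial^{(\ell-k)}v$ for a primary $v$ of weight $\Delta$, and your case split (use the $n=1$ family when $\Delta\geqslant1$, and the $n=0$ family for $\QQ(M)$ with $\Delta=\frac{1}{2}$) is the same weight bookkeeping as the paper's Cases I and II. What is missing is the induction that absorbs the derivative terms: for fixed $\xi\in V$ one proves $\partial^{(\ell)}\xi\otimes f\in\mathcal{V}$ for all $f\in D_m$ simultaneously, by induction on $\ell$. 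The base case is $(\xi\otimes f)_{(1)}(\LL\otimes1)=\Delta\,\xi\otimes f$, where the sum genuinely collapses because $\xi_{(k)}\LL=0$ for $k\geqslant2$; in the inductive step the $j=0$ term of the expansion is the desired element with nonzero scalar coefficient (namely $-\frac{1}{2}(\ell+1)$ for $\Delta=\frac{1}{2}$, and $(\ell+1)(\Delta-1)+1>0$ for $\Delta\geqslant1$), while the $j\geqslant1$ terms are scalar multiples of $\partial^{(\ell')}\xi\otimes\delta_t^{(j)}(f)$ with strictly smaller $\ell'$, hence lie in $\mathcal{V}$ by the induction hypothesis. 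With that induction inserted in place of the tensor-factor identification, your argument becomes essentially the paper's proof.
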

\begin{proof}
We denote the $\C$--vector space on the right hand side by $\mathcal{V}$. Let $v\in\mathcal{A}$ be a primary eigenvector having eigenvalue $\Delta$ with respect to $\LL$, i.e., $v$ satisfies
$$v_{(0)}\LL=(\Delta-1)\partial v,\quad v_{(1)}\LL=\Delta v,\quad v_{(k)}\LL=0, \text{ for }k\geqslant2.$$
Then we deduce that
$$v_{(k)}\partial^{(\ell-1)}\LL=\begin{cases}(\ell(\Delta-1)+k)\partial^{(\ell-k)}v,&\text{if }\ell\geqslant k,\\
0,&\text{if }\ell<k,
\end{cases}$$
for $k\geqslant0$ and $\ell\geqslant1$.

In the conformal superalgebra $\mathcal{A}$, $\Delta=2,1,1,\frac{3}{2},\frac{1}{2}$ if $a=\LL,\UU,\TT^{\pm}(X),\GG(M),\QQ(M)$ respectively, where $X\in\frak{sl}_2(\mathbb{C})$ and $M\in\mathbb{C}^{2\times2}$. We consider two cases.

Case I: $\Delta=\frac{1}{2}$. Then
\begin{align*}
(v\otimes f)_{(1)}&(\LL\otimes1)=\frac{1}{2}v\otimes f,\\
(v\otimes f)_{(0)}&(\partial^{(\ell)}\LL\otimes1)\\
&=\sum\limits_{k\geqslant0}(v_{(k)}\partial^{(\ell)}\LL)\otimes\delta_t^{(k)}(f)\\
&=\sum\limits_{k=0}^{\ell+1}(-\frac{1}{2}(\ell+1)+k)\partial^{(\ell+1-k)}v\otimes\delta_t^{(k)}(f)\\
&=-\frac{1}{2}(\ell+1)\partial^{(\ell+1)}v\otimes f+\sum\limits_{k=1}^{\ell+1}(-\frac{1}{2}(\ell+1)+k)\partial^{(\ell+1-k)}v\otimes\delta_t^{(k)}(f)
\end{align*}
for all $\ell\geqslant0$. By induction, we obtain $\partial^{(\ell)}v\otimes f\in\mathcal{V}$ for all $\ell\geqslant0$.

Case II: $\Delta\geqslant1$. Then
\begin{align*}
(v\otimes f)_{(1)}&(\LL\otimes1)=\Delta v\otimes f,\\
(v\otimes f)_{(1)}&(\partial^{(\ell)}\LL\otimes1)\\
&=\sum\limits_{k\geqslant0}(v_{(k+1)}\partial^{(\ell)}\LL)\otimes\delta_t^{(k)}(f)\\
&=\sum\limits_{k=0}^{\ell}((\ell+1)(\Delta-1)+k+1)\partial^{(\ell-k)}v\otimes\delta_t^{(k)}(f)\\
&=((\ell+1)(\Delta-1)+1)\partial^{(\ell)}v\otimes f+\sum\limits_{k=0}^{\ell}((\ell+1)(\Delta-1)+k+1)\partial^{(\ell-k)}v\otimes\delta_t^{(k)}(f).
\end{align*}
Since $\Delta\geqslant1$, $(\ell+1)(\Delta-1)+1>0$. By induction, $\partial^{(\ell)}v\otimes f\in\mathcal{V}$.

Recall that $\mathcal{A}_{\mathcal{D}_m}=\Bbb{C}[\partial]\otimes_{\Bbb{C}}V\otimes_{\Bbb{C}}D_m$, where $V=\mathrm{span}_{\mathbb{C}}\{\LL,\UU,\TT^{\pm}(X),\GG(M),\QQ(M)|X\in\frak{sl}_2(\Bbb{C}),M\in\Bbb{C}^{2\times2}\}$. We observe that $\mathcal{A}_{\mathcal{D}_m}$ is spanned by $\partial^{(\ell)}v\otimes f$ for $\ell\geqslant0, v\in V$, and $f\in D_m$. Hence, $\mathcal{A}_{\mathcal{D}_m}=\mathcal{V}$.
\end{proof}

\begin{proposition}
\label{prop:centroid}
Let $\mathcal{B}:=\mathcal{L}(\mathcal{A},\sigma)$ be the twisted loop conformal superalgebra based on $\mathcal{A}$ with respect to an automorphism $\sigma$ of order $m$. Then $\mathrm{Ctd}_{\mathbb{C}}(\mathcal{B})\cong D$.
\end{proposition}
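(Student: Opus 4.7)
The plan is to compute the centroid after base change to $\mathcal{A}_{\mathcal{D}_m}$, where Lemma~\ref{lem:weightvectors} provides a convenient generating set, and then to recover $\mathrm{Ctd}_{\mathbb{C}}(\mathcal{B})$ by Galois descent with respect to $\Gamma := \mathrm{Gal}(D_m/D) \cong \mathbb{Z}/m\mathbb{Z}$, whose fixed subring of $D_m$ is $D$. Injectivity of the canonical map $D \to \mathrm{Ctd}_{\mathbb{C}}(\mathcal{B})$ is immediate from the faithfulness of $\mathcal{B}$ as a $D$--module. For surjectivity, I would first verify that every $\chi \in \mathrm{Ctd}_{\mathbb{C}}(\mathcal{B})$ is $D$--linear, using the identity $(ra)_{(0)} b = r(a_{(0)} b)$ valid in any differential conformal superalgebra, together with the fact that $\mathcal{B}$ is spanned by its $(0)$-- and $(1)$--products (a property inherited from Lemma~\ref{lem:weightvectors} via faithfully flat descent along $\mathcal{D} \to \mathcal{D}_m$); then extend $\chi$ canonically to $\chi_{D_m} := \chi \otimes \mathrm{id}_{D_m}$ on $\mathcal{A}_{\mathcal{D}_m} \cong \mathcal{B} \otimes_D D_m$, obtaining a $\Gamma$--equivariant centroid element under the twisted Galois action $\gamma \cdot (a \otimes f) = \sigma(a) \otimes \gamma^{-1}(f)$, whose fixed locus is precisely $\mathcal{B}$.

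The heart of the proof is to establish $\mathrm{Ctd}_{\mathbb{C}}(\mathcal{A}_{\mathcal{D}_m}) \cong D_m$ via $r \mapsto r \cdot \mathrm{id}$. Given $\chi \in \mathrm{Ctd}_{\mathbb{C}}(\mathcal{A}_{\mathcal{D}_m})$, Lemma~\ref{lem:weightvectors} together with the centroid identity implies that $\chi$ is determined by its values on $\partial^{(\ell)}\LL \otimes 1$, and the relation $(\LL \otimes 1)_{(0)}(\partial^{(\ell)}\LL \otimes 1) = (\ell+1)\partial^{(\ell+1)}\LL \otimes 1$ reduces matters to a single value $w := \chi(\LL \otimes 1)$. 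The bracket $[\LL_\lambda \LL] = (\partial + 2\lambda)\LL$ forces $w$ to satisfy $\LL_{(1)} w = 2w$ and $\LL_{(n)} w = 0$ for $n \geq 2$, i.e.\ $w$ is quasi-primary of weight $2$. Since $\LL_{(1)}$ acts on $\partial^{(\ell)}v \otimes f$ (for $v \in V$ primary of conformal weight $\Delta$) by the scalar $\Delta + \ell$, the weight-$2$ eigenspace of $\mathcal{A}_{\mathcal{D}_m}$ is the $D_m$--span of $\LL$, $\partial \UU$, and $\partial \TT^{\pm}(X)$ for $X \in \mathfrak{sl}_2(\mathbb{C})$; and the derivation identity $\LL_{(2)}(\partial v) = \partial(\LL_{(2)} v) + 2\LL_{(1)} v$ selects $\LL \otimes D_m$ as the unique quasi-primary summand. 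Thus $w = \LL \otimes r$ for some $r \in D_m$, and a short verification via Lemma~\ref{lem:weightvectors} (writing an arbitrary element of $\mathcal{A}_{\mathcal{D}_m}$ as $\sum x_{(k)}(\partial^{(\ell)}\LL \otimes 1)$ and applying the centroid identity) confirms that $\chi$ is multiplication by $r$ everywhere on $\mathcal{A}_{\mathcal{D}_m}$.

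Galois descent then yields $r \in D_m^{\Gamma} = D$, whence $\chi = r_\mathcal{B}$ and the proof concludes. I expect the main obstacle to be the weight-$2$ quasi-primary classification above: verifying via the explicit bracket relations of Section~\ref{sec:conformalalgebra} that $\LL_{(2)}$ annihilates $\LL$ but not $\partial\UU$ or $\partial\TT^{\pm}(X)$, and that no other $\Delta + \ell = 2$ combinations arise from the odd part of $V$ (weights $3/2$ and $1/2$) for integer $\ell$. Everything else --- the $D$--linearity of centroid elements, the eigenspace decomposition, and the Galois descent --- is routine, and this explicit check is essentially the only step where the particular bracket structure of the large $N=4$ conformal superalgebra genuinely enters.
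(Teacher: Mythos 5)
Your central computation --- that a centroid element is pinned down by its value on $\LL\otimes 1$, that this value lies in the eigenvalue-$2$ eigenspace of $(\LL\otimes 1)_{(1)}$ spanned by $\LL\otimes D_m$, $\partial\UU\otimes D_m$, $\partial\TT^{\pm}(X)\otimes D_m$, and that $(\LL\otimes1)_{(2)}$ eliminates the latter summands --- is exactly the paper's argument, and it is correct. The genuine gap is in the descent scaffolding you wrap around it. First, the identity $(ra)_{(0)}b=r(a_{(0)}b)$ you invoke is false in a differential conformal superalgebra: by (\ref{eq:loopprod}) the $n$-products are $D$-linear only in the \emph{second} argument, while $(ra)_{(n)}b=\sum_{j\geqslant0}\delta_t^{(j)}(r)\,a_{(n+j)}b$; e.g.\ $(\LL\otimes t)_{(0)}(\LL\otimes1)=\partial\LL\otimes t+2\LL\otimes1\neq t\bigl((\LL\otimes1)_{(0)}(\LL\otimes1)\bigr)$. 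Since the centroid condition only lets you move $\chi$ across the second slot, your argument for $D$-linearity of an arbitrary $\chi\in\mathrm{Ctd}_{\mathbb{C}}(\mathcal{B})$ becomes circular: writing $ru=a_{(n)}(rb)$ gives $\chi(ru)=a_{(n)}\chi(rb)$, which is $r\chi(u)$ only if you already know $\chi(rb)=r\chi(b)$. Second, the spanning property you want for $\mathcal{B}$ (by its own $(0)$- and $(1)$-products) does not follow from Lemma~\ref{lem:weightvectors} ``by faithfully flat descent'': products in $\mathcal{B}\otimes_D D_m$ of elements with nonconstant coefficients involve all higher $n$-products of $\mathcal{B}$, so the property does not transfer formally from $\mathcal{A}_{\mathcal{D}_m}$ to $\mathcal{B}$. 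Without $D$-linearity the extension $\chi\otimes\mathrm{id}_{D_m}$ is not even defined, so the Galois-descent step has no starting point.

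What repairs this is precisely what the paper does, and it is not routine: using Remark~\ref{rmk:autoC} one writes $\sigma=\theta_{A,B}\circ\tau_\alpha\circ\omega^{\varepsilon}$, so that either $\LL$ is $\sigma$-fixed or (when $\sigma(\UU)=-\UU$) the modified conformal vector $\LL'=\LL+\frac{\alpha}{2}\UU$ is; the averaging projector $\pi=\frac1m\sum_i(\sigma\otimes\psi)^i$ then pushes the spanning set of Lemma~\ref{lem:weightvectors}, with second slot the $\sigma$-fixed element $\partial^{(\ell)}\LL\otimes1$ (or $\partial^{(\ell)}\LL'\otimes1$), down into $\mathcal{B}$, and one computes $\chi$ on these elements directly by your eigenvalue argument, concluding $\chi=f_{\mathcal{B}}$ with $f\in D$ without any base change. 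Your proposal omits this case analysis on $\sigma$ entirely, yet it is exactly the input needed to make either the paper's direct route or your descent route work; once it is supplied, the detour through $\mathrm{Ctd}_{\mathbb{C}}(\mathcal{A}_{\mathcal{D}_m})\cong D_m$ and $\Gamma$-equivariance is unnecessary.
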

\begin{proof}
For $f\in D$, there is an element $f_{\mathcal{B}}\in\mathrm{Ctd}_{\mathbb{C}}(\mathcal{B})$ given by $v\mapsto fv$. Hence, $D\subseteq\mathrm{Ctd}_{\mathbb{C}}(\mathcal{B})$. It suffices to show any element $\chi\in\mathcal{B}$ is of the form $f_{\mathcal{B}}$ for some $f\in D$.

We consider the $\Bbb{C}$--linear map
$$\pi:\mathcal{A}_{\mathcal{D}_m}\rightarrow\mathcal{A}_{\mathcal{D}_m},\quad
v\mapsto \frac{1}{m}\sum\limits_{i=0}^{m-1}(\sigma\otimes\psi)^i(v),$$
where $\psi:D_m\rightarrow D_m, t^{\frac{1}{m}}\mapsto\zeta_m^{-1}t^{\frac{1}{m}}$. Then $\mathcal{B}=\pi(\mathcal{A}_{\mathcal{D}_m})$.

It has been pointed out in Remark~\ref{rmk:autoC} that $\sigma=\theta_{A,B}\circ\tau_{\alpha}\circ\omega^{\varepsilon}$, where $A,B\in\mathbf{SL}_2(\mathbb{C})$, $\alpha\in\mathbb{C}$ and $\varepsilon\in\{0,1\}$. Hence,
$$\sigma(\LL)=\LL+\alpha\UU,\quad \sigma(\UU)=\pm\UU.$$
We consider two cases.

Case I: $\sigma(\UU)=\UU$. In this situation, $\sigma(\LL)=\LL$ since $\sigma$ is of finite order. Hence,
\begin{align*}
\pi(v_{(n)}(\partial^{(\ell)}\LL\otimes1))
&=\frac{1}{m}\sum\limits_{i=0}^{m-1}(\sigma\otimes\psi)^i(v_{(n)}(\partial^{(\ell)}\LL\otimes1))\\
&=\frac{1}{m}\sum\limits_{i=0}^{m-1}((\sigma\otimes\psi)^i(v))_{(n)}(\sigma\otimes\psi)^i(\partial^{(\ell)}\LL\otimes1)\\
&=\frac{1}{m}\sum\limits_{i=0}^{m-1}((\sigma\otimes\psi)^i(v))_{(n)}(\partial^{(\ell)}\LL\otimes1)\\
&=\pi(v)_{(n)}(\partial^{(\ell)}\LL\otimes1).
\end{align*}
By Lemma~\ref{lem:weightvectors}, we deduce
\begin{align*}
\mathcal{B}=\pi(\mathcal{A}_{\mathcal{D}_m})
&=\mathrm{span}_{\mathbb{C}}\{\pi(v)_{(0)}(\partial^{(\ell)}\LL\otimes1),
\pi(v)_{(1)}(\partial^{(\ell)}\LL\otimes1)|v\in\mathcal{A}_{\mathcal{D}_m},\ell\geqslant0\},\\
&=\mathrm{span}_{\mathbb{C}}\{u_{(0)}(\partial^{(\ell)}\LL\otimes1),
u_{(1)}(\partial^{(\ell)}\LL\otimes1)|u\in\mathcal{B},\ell\geqslant0\}.
\end{align*}

Let $\chi\in\mathrm{Ctd}_{\mathbb{C}}(\mathcal{B})$. We claim that there is a $f\in D$ such that $\chi(\LL\otimes1)=\LL\otimes f$. Consider
$$(\LL\otimes1)_{(1)}\chi(\LL\otimes1)=\chi((\LL\otimes1)_{(1)}(\LL\otimes1))=2\chi(\LL\otimes1).$$
Hence, $\chi(\LL\otimes1)\in\mathcal{B}\subseteq\mathcal{A}_m$ is an eigenvector of $(\LL\otimes1)_{(1)}$ with eigenvalue $2$. We can check that the eigenspace of $(\LL\otimes1)_{(1)}$ with eigenvalue $2$ in $\mathcal{A}_{\mathcal{D}_m}$ is spanned by $\LL\otimes f, \partial v_i\otimes f_i$, where $\{v_i\}_{i=1}^7$ is a $\Bbb{C}$--basis of $\mathrm{span}_{\mathbb{C}}\{\TT^{\pm}(X),\UU|X\in\frak{sl}_2(\mathbb{C})\}$ and $f,f_i\in D_m$. We write
$$\chi(\LL\otimes1)=\LL\otimes f+\sum\limits_{i=1}^7\partial v_i\otimes f_i.$$
Then
\begin{align*}
0&=\chi((\LL\otimes1)_{(2)}(\LL\otimes1))\\
&=(\LL\otimes1)_{(2)}\chi(\LL\otimes1)\\
&=(\LL\otimes1)_{(2)}\left(\LL\otimes f+\sum\limits_{i=1}^7\partial v_i\otimes f_i\right)\\
&=2\sum\limits_{i=1}^7v_i\otimes f_i.
\end{align*}
Since $\{v_i\}$ is linear independent over $\Bbb{C}$, $f_i=0$, and so $\chi(\LL\otimes1)=\LL\otimes f$. In particular, $f\in D$ since $\LL\otimes f\in\mathcal{B}$. Furthermore,
$$\ell\chi(\partial^{(\ell)}\LL\otimes1)=\chi((\LL\otimes1)_{(0)}(\partial^{(\ell-1)}\LL\otimes1))
=(\LL\otimes1)_{(0)}\chi(\partial^{(\ell-1)}\LL\otimes1),$$
for all $\ell\geqslant1$. By induction, $\chi(\partial^{(\ell)}\LL\otimes1)=\partial^{(\ell)}\LL\otimes f$. Hence, for $v\in\mathcal{B}$,
\begin{align*}
\chi(v_{(0)}(\partial^{(\ell)}\LL\otimes1))&=v_{(0)}\chi(\partial^{(\ell)}\LL\otimes1)=v_{(0)}(\partial^{(\ell)}\LL\otimes f)=f(v_{(0)}(\partial^{(\ell)}\LL\otimes1)).\\
\chi(v_{(1)}(\partial^{(\ell)}\LL\otimes1))&=v_{(1)}\chi(\partial^{(\ell)}\LL\otimes1)=v_{(1)}(\partial^{(\ell)}\LL\otimes f)=f(v_{(1)}(\partial^{(\ell)}\LL\otimes1)).
\end{align*}
It follows $\chi=f_{\mathcal{B}}$.

Case II: $\sigma(\UU)=-\UU$ and $\sigma(\LL)=\LL+\alpha\UU$. In this situation, we replace $\LL$ by $\LL':=\LL+\frac{\alpha}{2}\UU$ and replace $\GG^p$ by $\GG'^p:=\GG^p+\frac{\alpha}{2}\QQ^p$ for $p=1,2,3,4$. Then $\mathcal{A}$ is also a $\Bbb{C}[\partial]$--module generated by $\LL', \TT^{\pm i}, \UU, \GG'^p, \QQ^p, i=1,2,3, p=1,2,3,4$, and in $\mathcal{A}$, we have
\begin{align*}
&[\LL'{}_\lambda\LL']=(\partial+2\lambda)\LL',
&&[\LL'{}_\lambda\TT^{\pm i}]=(\partial+\lambda)\TT^{\pm i},
&&[\LL'{}_\lambda\UU]=(\partial+\lambda)\UU,\\
&[\LL'{}_\lambda\GG'^p]=(\partial+\frac{3}{2}\lambda)\GG'^p,
&&[\LL'{}_\lambda\QQ^p]=(\partial+\frac{1}{2}\lambda)\QQ^p.
\end{align*}
Similar arguments as in Lemma~\ref{lem:weightvectors} show that
$$\mathcal{A}_{\mathcal{D}_m}=\mathrm{span}_{\Bbb{C}}\{v_{(0)}(\partial^\ell\LL'\otimes1), v_{(1)}(\partial^\ell\LL'\otimes1)|v\in\mathcal{A}_{\mathcal{D}_m},\ell\geqslant0\}.$$
Following the same arguments as in Case I, we conclude that $\chi=f_{\mathcal{B}}$ for some $f\in D$.
\end{proof}

\begin{theorem}
Every twisted loop conformal superalgebra based on the large $N=4$ conformal superalgebra $\mathcal{A}$ is isomorphic to either $\mathcal{L}(\mathcal{A},\mathrm{id})$ or $\mathcal{L}(\mathcal{A},\omega)$ as conformal superalgebras over $\Bbb{C}$.
\end{theorem}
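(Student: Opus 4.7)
The plan is to deduce the theorem from the differential--level classification of Proposition~\ref{prop:Diso}, using the simple observation that a $\mathcal{D}$--differential isomorphism between twisted loop conformal superalgebras is in particular a $\mathbb{C}$--conformal isomorphism. The centroid computation of Proposition~\ref{prop:centroid} then guarantees that this passage is essentially lossless, so that the $\mathbb{C}$--level classification matches the $\mathcal{D}$--level one.

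Concretely, given any twisted loop conformal superalgebra $\mathcal{L}(\mathcal{A},\sigma)$, I would first observe that it carries a canonical structure of $\widehat{\mathcal{D}}/\mathcal{D}$--form of $\mathcal{A}_{\mathcal{D}}$, with $D$ acting through multiplication in the $\widehat{D}$--factor and $\delta_t$ acting as $\widehat{\partial}$ on that factor. Proposition~\ref{prop:Diso} then supplies a differential $\mathcal{D}$--isomorphism between $\mathcal{L}(\mathcal{A},\sigma)$ and one of $\mathcal{L}(\mathcal{A},\mathrm{id})$ or $\mathcal{L}(\mathcal{A},\omega)$. Forgetting the extra $\mathcal{D}$--structure yields the claimed $\mathbb{C}$--conformal isomorphism.

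To justify the ``only two'' phrasing of the abstract one also wants to check that $\mathcal{L}(\mathcal{A},\mathrm{id})$ and $\mathcal{L}(\mathcal{A},\omega)$ are not isomorphic as bare $\mathbb{C}$--conformal superalgebras, and this is where Proposition~\ref{prop:centroid} is essential. Since the centroid is a functorial invariant, any $\mathbb{C}$--isomorphism $\varphi$ between these two algebras descends to a $\mathbb{C}$--algebra automorphism $\bar\varphi$ of $D$, necessarily of the form $t \mapsto a t^{\epsilon}$ with $a \in \mathbb{C}^{\times}$ and $\epsilon \in \{\pm 1\}$. Lifting $\bar\varphi$ compatibly to $\widehat{D}$ and then to a $\mathbb{C}$--conformal automorphism of $\mathcal{A}_{\widehat{\mathcal{D}}}$, one replaces $\varphi$ by its composition with the inverse of this lift on the target. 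The new map is $D$--linear and remains a $\mathbb{C}$--conformal isomorphism, hence is automatically a differential $\mathcal{D}$--isomorphism between twisted loop superalgebras representing the two distinct classes of Proposition~\ref{prop:Diso}, contradicting that proposition.

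The main obstacle I anticipate is bookkeeping: carefully tracking how the change of variable $t \mapsto a t^{\pm 1}$ transports the defining cocycle of a twisted loop superalgebra, and confirming that the source and target classes $[\mathrm{id}]$ and $[\omega]$ are each preserved rather than swapped or merged. For $\omega$ of order two this turns out to be benign, and the argument follows the template already used for $N=2,3$ and the small $N=4$ case in \cite{KLP} and \cite{CP2011}, so much of the work can be imported directly from there.
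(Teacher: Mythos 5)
Your proposal is correct and follows essentially the same route as the paper: reduce to the differential $\widehat{\mathcal{D}}/\mathcal{D}$--classification of Proposition~\ref{prop:Diso}, then use the centroid computation of Proposition~\ref{prop:centroid} to identify $\mathbb{C}$--conformal isomorphism with $\mathcal{D}$--differential isomorphism. The only difference is presentational: where you sketch the centroid/change-of-variable transfer by hand (and in fact compatibility with $\widehat{\partial}$ forces the induced automorphism of $D$ to be trivial, so the $t\mapsto at^{\pm1}$ bookkeeping you anticipate largely evaporates), the paper simply invokes \cite[Corollary~2.36]{KLP} for this step.
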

\begin{proof}
As pointed out in \cite{KLP} that every twisted loop conformal superalgebra $\mathcal{L}(\mathcal{A},\sigma)$ based on $\mathcal{A}$ is not only a conformal superalgebra over $\Bbb{C}$ but also a differential conformal superalgebra over $\mathcal{D}$. Moreover, $\mathcal{L}(\mathcal{A},\sigma)$ is a $\widehat{\mathcal{D}}/\mathcal{D}$--form of $\mathcal{L}(\mathcal{A},\mathrm{id})=\mathcal{A}_{\mathcal{D}}$ (\cite[Proposition~2.4]{KLP}). By Proposition~\ref{prop:Diso}, there are only two $\widehat{\mathcal{D}}/\mathcal{D}$--forms of $\mathcal{A}_{\mathcal{D}}$ up to isomorphisms of differential conformal superalgebras over $\mathcal{D}$. They are $\mathcal{L}(\mathcal{A},\mathrm{id})$ and $\mathcal{L}(\mathcal{A},\omega)$.

From Proposition~\ref{prop:centroid} and \cite[Corollary~2.36]{KLP}, we deduce that two twisted loop conformal superalgebra $\mathcal{L}(\mathcal{A},\sigma_1)$ and $\mathcal{L}(\mathcal{A},\sigma_2)$ based on $\mathcal{A}$ are isomorphic as differential conformal superalgebras over $\mathcal{D}$ if and only if $\mathcal{L}(\mathcal{A},\sigma_1)$ is isomorphic to $\mathcal{L}(\mathcal{A},\sigma_2)$ as conformal superalgebras over $\Bbb{C}$. Hence, every twisted loop conformal superalgebra based on $\mathcal{A}$ is isomorphic to either $\mathcal{L}(\mathcal{A},\mathrm{id})$ or $\mathcal{L}(\mathcal{A},\omega)$ as conformal superalgebras over $\Bbb{C}$.
\end{proof}

\section{The corresponding Lie superalgebra}
\label{sec:superalgebra}

As we have seen in Section~\ref{sec:intro}, every twisted loop conformal superalgebra $\mathcal{L}(\mathcal{A},\sigma)$ based on the large $N=4$ conformal superalgebra $\mathcal{A}$ yields an infinite dimensional Lie superalgebra
$\mathrm{Alg}(\mathcal{A},\sigma)$. In particular, the untwisted loop conformal superalgebra $\mathcal{L}(\mathcal{A},\mathrm{id})$ yields the Lie superalgbra
$\frak{g}$ described in Section~\ref{sec:conformalalgebra}, which is the centerless core of the large $N=4$ superconformal algebras given in \cite{STV}.

There is another twisted loop conformal superalgebra $\mathcal{L}(\mathcal{A},\omega)$ not isomorphic to $\mathcal{L}(\mathcal{A},\mathrm{id})$.
$\mathcal{L}(\mathcal{A},\omega)$ gives rise to another Lie superalgebra $\mathrm{Alg}(\mathcal{A},\omega)$. In the rest part of this section, we will explicitly state the generators and relations of $\mathrm{Alg}(\mathcal{A},\omega)$.

Recall that $\mathcal{L}(\mathcal{A},\omega)=\oplus_{\ell\in\Bbb{Z}}\mathcal{A}_{\ell}\otimes \Bbb{C}t^{\ell/2}$, where $\mathcal{A}_{\ell}=\{a\in\mathcal{A}|\omega(a)=(-1)^{\ell}a\}$. It can be directly computed that
\begin{align*}
\mathcal{A}_{\ell}=\begin{cases}
\mathbb{C}[\partial]\otimes_{\Bbb{C}}\mathrm{span}_{\mathbb{C}}\{\LL,\TT^{+i}+\TT^{-i},\GG^i,\QQ^4|i=1,2,3\},&\text{if }\ell\text{ is even,}\\
\mathbb{C}[\partial]\otimes_{\Bbb{C}}
\mathrm{span}_{\mathbb{C}}\{\UU,\TT^{+i}-\TT^{-i},\GG^4,\QQ^i|i=1,2,3\},&\text{if }\ell\text{ is odd.}
\end{cases}
\end{align*}
As a vector space, $\mathrm{Alg}(\mathcal{A},\omega)=\mathcal{L}(\mathcal{A},\omega)/\widehat{\partial}\mathcal{L}(\mathcal{A},\omega)$.
We use $\bar{v}$ to denote the image of an element $v\in\mathcal{L}(\mathcal{A},\omega)$ in $\mathrm{Alg}(\mathcal{A},\omega)$. Then the following elements,
\begin{align*}
&\LL_m=\overline{\LL\otimes t^{m+1}},
&&\TT^i_m=\overline{(\TT^{+i}+\TT^{-i})\otimes t^m},
&&\JJ^i_r=\overline{(\TT^{+i}-\TT^{-i})\otimes t^r},
&&\UU_r=\overline{\UU\otimes t^r},\\
&\GG^i_r=\overline{\GG^i\otimes t^{r+1/2}},
&&\Phi_m=\overline{\GG^4\otimes t^{m+1/2}},
&&\QQ^i_m=\overline{\QQ^i\otimes t^{m-1/2}},
&&\Psi_r=\overline{\QQ^4\otimes t^{r-1/2}}.
\end{align*}
for $i=1,2,3$, $m\in\Bbb{Z}$ and $r\in1/2+\Bbb{Z}$, form a basis of $\mathrm{Alg}(\mathcal{A},\sigma)$. The super-bracket on $\mathrm{Alg}(\mathcal{A},\omega)$ can be written as:
\begin{center}
\begin{tabular}{ll}
$[\LL_m,\LL_n]=(m-n)\LL_{m+n},$
&$[\LL_m,\UU_s]=-s\UU_{m+s},$\\
$[\LL_m,\TT^i_n]=-n\TT^i_n,$
&$[\LL_m,\JJ^i_s]=-s\JJ^i_{m+s},$\\
$[\LL_m,\GG^i_s]=(m/2-s)\GG^i_{m+s},$
&$[\LL_m,\Phi_n]=(m/2-n)\Phi_{m+n},$\\
$[\LL_m,\QQ^i_n]=-(m/2+n)\GG^i_{m+n},$
&$[\LL_m,\Psi_s]=-(m/2+s)\Psi_{m+s},$\\
$[\UU_r,\UU_s]=0,$
&$[\UU_r,\TT^i_n]=[\UU_r,\JJ^i_s]=0,$\\
$[\UU_r,\GG^i_s]=r\QQ^i_{r+s},$
&$[\UU_r,\Phi_n]=r\Psi_{r+n},$\\
$[\UU_r,\QQ^i_n]=0,$
&$[\UU_r,\Psi_s]=0,$\\
$[\TT^i_m,\TT^j_n]=\epsilon_{ijk}\TT^k_{m+n},$
&$[\TT^i_m,\JJ^j_s]=\epsilon_{ijk}\JJ^k_{m+s},$\\
$[\TT^i_m,\GG^j_s]=\epsilon_{ijk}\GG^k_{m+s}-m\delta_{ij}\Psi_{m+s},$
&$[\TT^i_m,\Phi_n]=m\QQ^i_{m+n},$\\
$[\TT^i_m,\QQ^j_n]=\epsilon_{ijk}\QQ^k_{m+n},$
&$[\TT^i_m,\Psi_s]=0,$\\
$[\JJ^i_r,\JJ^j_s]=\epsilon_{ijk}\TT^k_{r+s},$
&\\
$[\JJ^i_r,\GG^j_s]=\delta_{ij}\Phi_{r+s}-r\epsilon_{ijk}\QQ^k_{r+s},$
&$[\JJ^i_r,\Phi_n]=-\GG^i_{r+n},$\\
$[\JJ^i_r,\QQ^j_n]=\delta_{ij}\Psi_{r+n},$
&$[\JJ^i_r,\Psi_s]=-\QQ^i_{r+s},$\\
$[\GG^i_r,\GG^j_s]=2\delta_{ij}\LL_{r+s}-\epsilon_{ijk}(r-s)\TT^k_{r+s},$
&$[\GG^i_r,\Phi_n]=(n-r)\JJ^i_{r+n},$\\
$[\QQ^i_m,\GG^j_s]=\delta_{ij}\UU_{m+s}+\epsilon_{ijk}\JJ^k_{m+s},$
&$[\QQ^i_m,\Phi_n]=\TT^i_{m+n},$\\
$[\QQ^i_m,\QQ^j_n]=[\QQ^i_m,\Psi_s]=[\Psi_r,\Psi_s]=0,$
&$[\Phi_m,\Phi_n]=2\LL_{m+n},$\\
$[\Psi_r,\GG^i_s]=-\TT^i_{r+s},$&$[\Psi_r,\Phi_n]=\UU_{r+n},$
\end{tabular}
\end{center}
for $i,j=1,2,3$, $m,n\in\Bbb{Z}$ and $r,s\in1/2+\Bbb{Z}$.

In fact, the twisted large $N=4$ superconformal algebra described in \cite{V} is isomorphic to a central extension of the Lie superalgebra $\mathrm{Alg}(\mathcal{A},\omega)$.

\begin{remark}\label{miss}
Let $\mathcal{A}(\gamma)$ be conformal superalgebra associated to $\frak{g}(\gamma)$.  From the relation
$$[\LL_\lambda\LL]=(\partial+2\lambda)\LL+\frac{1}{12}\lambda^3c,\text{ and }[\LL_\lambda\UU]=(\partial+\lambda)\UU-\frac{1}{3}\left(\gamma-\frac{1}{2}\right)\lambda^2c,$$
in $\mathcal{A}(\gamma)$, we observe that the automorphisms $\tau_f$ with $f\in\Bbb{C}$ and $\omega$ of $\mathcal{A}$ constructed in Section~\ref{sec:automorphism} can not be lifted to an automorphism of $\mathcal{A}(\gamma)$ if $\gamma\neq \frac{1}{2}$. This would seem to justify the absence of one-dimensional central extensions of $\mathrm{Alg}(\mathcal{A},\omega)$ in \cite{STV} when $\gamma\neq \frac{1}{2}$.

In contrast, both the automorphism $\tau_f$ and $\omega$ of $\mathcal{A}$ can be lifted to automorphisms $\hat{\tau}_f$ and $\hat{\omega}$ of $\mathcal{A}(\frac{1}{2})$. The action of $\hat{\tau}_f$ and $\hat{\omega}$ on $\mathcal{A}(\gamma)$ is explicitly given by:
\begin{align*}
&\hat{\tau}_f(\LL)=\LL+f\UU-\frac{f^2}{6}c,&&\hat{\tau}_f(\UU)=\UU-\frac{f}{3}c,&&\hat{\tau}_f(\TT^{\pm i})=\TT^{\pm i},\\
&\hat{\tau}_f(\GG(M))=\GG(M)+\QQ(fM),&&\hat{\tau}_f(\QQ(M))=\QQ(M),&&\hat{\tau}_f(c)=c\\
&\hat{\omega}(\LL)=\LL,&&\hat{\omega}(\UU)=-\UU,&&\hat{\omega}(\TT^{\pm i})=\TT^{\mp i},\\
&\hat{\omega}(\GG(M))=\GG(M^\dag),&&\hat{\omega}(\QQ)=-\QQ(M^\dag),&&\hat{\omega}(c)=c.
\end{align*}
for $i=1,2,3, M\in\Bbb{C}^{2\times2}$.

There is a natural (injective) group homomorphism from $\mathrm{Aut}_{\Bbb{C}\text{-conf}}\big(\mathcal{A}(\gamma)\big) \to \mathrm{Aut}\big(\frak{g}(\gamma)\big)$. The above considerations applied to the case $\gamma = \frac{1}{2}$ show that  the group of automorphisms of $\frak{g}(\frac{1}{2})$ is indeed larger than the one described in the Physics literature.
\end{remark}


\begin{thebibliography}{9999}
\bibitem{C2012} Z. Chang, {\it The automorphism group functor of the $N=4$ Lie conformal superalgebra,} preprint on Arxiv:1302.3897.
\bibitem{CP2011} Z. Chang, A. Pianzola, {\it Automorphisms and twisted forms of the $N=1,2,3$ Lie conformal superalgebras,} Communications in Number Theory and Physics {\bf 5 (4)}, 751-777, (2011).
\bibitem{DST} F. Defever, S. Schrans, and K. Thielemans, {\it Moding superconformal algebras,} Physics Letters B {\bf 212}, 467-471, (1988).
\bibitem{DG} M.  Demazure, P. Gabriel,
{\em Groupes alg\'ebriques}, North-Holland (1970).
\bibitem{GP} P. Gille, A. Pianzola, {\it Isotriviality and \'{e}tale cohomology of Laurent polynomial rings,} Journal of Pure and Applied Algebra {\bf 212}, 780-800, (2008).
\bibitem{K} V. Kac, {\it Vertex algebras for beginners, 2nd edition}, American Mathematical Society, (1998).
\bibitem{KLP} V. Kac, M. Lau, A. Pianzola, {\it Differential conformal superalgebras and their forms}, Advances in Mathematics {\bf 222}, 809-861, (2009).
\bibitem{P} A. Pianzola, {\it Vanishing of $H^1$ for Dedekind rings and applications to loop algebras,} C. R. Acad. Sci. Paris, Ser. I {\bf 340}, 633-638, (2005).
\bibitem{R} J. Rasmussen, {\it A non-reductive $N=4$ superconformal algebra,} Journal of Physics A: Mathematical and General {\bf 35}, 2037-2044, (2002).
\bibitem{Rowen} L. H. Rowen, {\it Polynomial identities in ring theory,} Academic Press, New York, (1980).
\bibitem{S} J. P. Serre, {\it Galois Cohomology,} Springer, Heidelberg, (1997).
\bibitem{SS1987} A. Schwimmer, N. Seiberg, {\it Comments on the $N=2,3,4$ superconformal algebras in two dimensions,} Physics Letters B {\bf 184 (2, 3)}, 191-196, (1987).
\bibitem{STV} A. Sevrin, W. Troost, A. Van Proeyen, {\it Superconformal algebras in two dimensions with $N=4$}, Physics letters B {\bf 208}, 447-450, (1988).
\bibitem{V} S. Vandoren, {\it Unitary representations of twisted $N=4$ superconformal algebras}, Modern Physics Letters A {\bf 6 (21)}, 1983-1991, (1991).
\end{thebibliography}
\end{document}